%%%%%%
%%%%%%  
%%%%%  A third-order dispersive flow
%%%%%% for closed curves into K'aehler manifolds
%%%%%%
%%%%%% By Eiji Onodera
%%%%%%
\documentclass[a4paper,reqno,draft,12pt]{amsart}
\usepackage{amsmath}
\usepackage{amssymb}
\usepackage{amsthm}
\usepackage{mathrsfs}
\usepackage{times}   
\allowdisplaybreaks
%%%%%%
%%%%%% 
%%%%%%
%\setlength{\columnsep}{20mm}
\setlength{\textheight}{666pt}
\setlength{\textwidth}{452pt}
\setlength{\oddsidemargin}{0mm}
\setlength{\evensidemargin}{0mm}
\setlength{\topmargin}{0mm}
\setlength{\marginparsep}{0mm}
\setlength{\marginparwidth}{0mm}
\numberwithin{equation}{section}
\allowdisplaybreaks
%%%%%
%%%%%% Theorems
%%%%%%
\theoremstyle{plain}
 \newtheorem{theorem}{Theorem}[section]
 
 \newtheorem{proposition}[theorem]{Proposition}
 \newtheorem{lemma}[theorem]{Lemma}
\theoremstyle{definition}
 \newtheorem{definition}{Definition}[section]
\theoremstyle{remark}
 \newtheorem{remark}{Remark}
%
%%%%%
%%%%%% Command
%%%%%%
\newcommand{\ep}{\varepsilon}
\newcommand{\p}{\partial}
\newcommand{\RR}{\mathbb{R}}
\newcommand{\TT}{\mathbb{T}}
%%%%%%
%%%%%% Begining Document
%%%%%%
\begin{document}
%%%%%%%%%%%%%%%%%
%%%%%
%%%%% title, authors and etc.
%%%%%
\title[Dispersive Flow]{
A third-order dispersive flow \\
for closed curves into K\"ahler manifolds }
\author[E.~Onodera]{Eiji ONODERA}
\address{Mathematical Institute, Tohoku University, Sendai 980-8578, Japan}
\email{sa3m09@math.tohoku.ac.jp}
\subjclass[2000]
{Primary 53C44; Secondary 35Q53, 35Q55}
\keywords{dispersive flow, Schr\"odinger map, geometric analysis, energy method}
\thanks{The author is supported by the JSPS Research Fellowships 
        for Young Scientists and 
        the JSPS Grant-in-Aid for Scientific Research No. 19$\cdot$3304.}
%%%%%
%%%%%
%%%%%
%%%%%make-title
%%%%
\maketitle
\begin{abstract}
This paper is devoted to studying the initial value problem 
for a third-order dispersive equation for closed curves 
into K\"ahler manifolds. 
This equation is a geometric generalization 
of a two-sphere valued system 
modeling the motion of vortex filament. 
We prove the local existence theorem
by using geometric analysis and classical energy method.
\end{abstract}
%
%
%%%%%%
%%%%%% Section 1. Introduction
%%%%%%
\section{Introduction}
\label{section:introduction}
%%%%
%%%% Problem
%%%%
In this paper we study the initial value problem of a third-order 
dispersive flow describing the motion of closed curves on K\"ahler 
manifolds. 
Let $(N, J, g)$ be a K\"ahler manifold 
with an almost complex structure $J$ and a K\"ahler metric $g$, 
and let $\nabla$ be the Levi-Civita connection with respect to $g$.
Consider the initial value problem of the form
\begin{alignat}{2}
  u_{t}
& = 
  a\,\nabla_x^2u_x
  + J_{u}\nabla_xu_x
  +b\,g_{u}(u_x, u_x)u_x
&
  \quad\text{in}\quad
& \RR\times \TT,
\label{equation:pde}
\\
  u(0,x)
& =
  u_0(x)
&
  \quad\text{in}\quad
& \TT,
\label{equation:data}
\end{alignat}
%%%%
%%%%
%%%%
where 
$a, b\in \RR$ are constants, 
$u=u(t,x)$ is an $N$-valued unknown function of 
$(t,x)\in \RR\times \TT$, 
$\TT=\RR/\mathbb{Z}$, 
$u_t(t,x)
=du_{(t,x)}\left(\p/\p{t}\right)_{(t,x)}$, 
$u_x(t,x)
=du_{(t,x)}\left(\p/\p{x}\right)_{(t,x)}$, 
$du$ is the differential of the mapping $u$, 
$\nabla_x
%=\nabla_{du(\partial/ {\partial x})}^N 
%:T_u N \to T_u N 
$
is the covariant derivative on $u^{-1}TN$
induced from $\nabla$ with respect to $x$,
and 
$J_u$ and $g_u$ mean 
the almost complex structure and the metric at $u{\in}N$ respectively. 
Here 
$u^{-1}TN=\bigcup_{(t,x)\in \RR\times \TT } T_{u(t,x)}N$
is the pull-back bundle over 
$\RR \times \TT$ from $TN$ via the mapping $u$. 
$V$ is said to be a section of $u^{-1}TN$ over $\TT\times \RR$
if $V(t,x)\in T_{u(t,x)}N$ for all $(t,x)\in \RR\times \TT$. 
$J_u$ and $g_u$ are 
a (1,1)-tensor field and a (0,2)-tensor field along $u$ 
respectively, and  
the equation \eqref{equation:pde} is 
an equality of sections of $u^{-1}TN$. 
We call the solution of \eqref{equation:pde} 
a dispersive flow in this paper. 
In particular, when $a=b=0$, 
the solutions are called one-dimensional Schr\"odinger maps.  
\par
%%%%
%%%% Physical Background
%%%%
Examples of \eqref{equation:pde} arise in classical mechanics related to 
vortex filament, ferromagnetic spin chain system and etc. 
For $\vec{u}=(u_1,u_2,u_3)\in\RR^3$ and 
$\vec{v}=(v_1,v_2,v_3)\in\RR^3$, 
let 
$$
\vec{u}\cdot\vec{v}=u_1v_1+u_2v_2+u_3v_3, 
\quad 
\lvert\vec{u}\rvert=\sqrt{\vec{u}\cdot\vec{u}}, 
$$
$$
\vec{u}\times\vec{v}
=
(u_2v_3-u_3v_2,u_3v_1-u_1v_3,u_1v_2-u_2v_1). 
$$
Let $\mathbb{S}^2$ be the two-sphere in $\RR^3$, that is, 
$\mathbb{S}^2
=
\{\vec{u}\in\RR^3 \ \vert \ \lvert\vec{u}\rvert=1\}$. 
In 1906, Da Rios formulated the equation modeling 
the motion of vortex filament of the form 
\begin{equation}
\vec{u}_t
= 
\vec{u} \times \vec{u}_{xx}, 
\label{equation:darios}
\end{equation}
where $\vec{u}(t,x)$ is $\mathbb{S}^2$-valued. 
See his celebrated paper \cite{DR}, and other references 
\cite{Hasimoto} and \cite{LL} for instance. 
The physical model \eqref{equation:darios} is 
an example of the equation of the one-dimensional Schr\"odinger map. 
Our equation \eqref{equation:pde} 
in the setting $b=a/2$ 
geometrically generalizes 
an $\mathbb{S}^2$-valued physical model 
\begin{equation}
\vec{u}_t
= 
\vec{u} \times \vec{u}_{xx}
+
a
\left[
\vec{u}_{xxx} 
+
\frac{3}{2}
\{\vec{u}_x\times (\vec{u}\times \vec{u}_x )\}_x
\right]
\label{equation:NT}
\end{equation}
proposed by Fukumoto and Miyazaki in \cite{FM}. 
\par
%%%
%%% Known Results
%%%
Here we state the known results on the mathematical analysis 
of the IVP \eqref{equation:pde}-\eqref{equation:data}. 
In case $a=b=0$, 
there are many studies on the existence theorem for 
\eqref{equation:pde}-\eqref{equation:data}. 
See 
\cite{CSU},
\cite{Ding} 
\cite{Koiso}, 
\cite{MCGAHAGAN},
\cite{NSU1}, 
\cite{NSU2}, 
\cite{PWW2}, 
\cite{SSB}
and references therein. 
In \cite{SSB} Sulem, Sulem and Bardos 
treated the higher dimensional ferromagnetic spin system of the form
\begin{equation}
\vec{u}_t 
=
\vec{u}\times \Delta_{\RR^{m}}\vec{u},
\label{equation:SSB}
\end{equation} 
where 
$\vec{u}(t,x)$ is the $\mathbb{S}^2$-valued function 
of $(t,x)\in \RR \times \RR^m$,
$\Delta_{\RR^{m}}$ is the Euclid Laplacian on $\RR^m$.
They proved global existence of smooth solution with small initial data, 
whereas they also proved that the problem admits the time-global 
solution with large data only if $m=1$. 
In \cite{Koiso} Koiso proved the local existence theorem 
of the IVP for the one-dimensional Schr\"odinger map 
for closed curves into K\"ahler manifolds of the form
\begin{equation}
u_t = J_u \nabla_x u_x
\label{equation:1dsf}
\end{equation}
in 
$H^{m+1}(\TT;N)$ for any integer $m\geqslant2$. 
Furthermore, he proved that the problem admits time-global solution 
if $N$ is a locally hermitian symmetric space. 
Recently, higher dimensional Schr\"odinger map into K\"ahler manifolds
has been studied. 
This equation is not only 
the higher dimensional version of \eqref{equation:1dsf}, 
but also the geometric generalization of \eqref{equation:SSB}. 
See e.g.
\cite{Ding}, \cite{MCGAHAGAN} and \cite{PWW2} 
for the detail. 
\par
In case $a\neq 0$, 
only $\mathbb{S}^2$-valued dispersive flow has been studied. 
In \cite{NT} Nishiyama and Tani proved the global existence theorem 
of the IVP for \eqref{equation:NT} 
in $H^{m+1}(\TT;\RR^3)$ with an integer $m\geqslant 2$ 
in the setting $b=a/2$. 
Moreover, 
they formulated the IVP for curves with two fixed edges on $\mathbb{S}^2$ 
at $x=\pm \infty$ for $x\in \RR$, 
and proved the global existence results also. 
\par
The purpose of this paper is to study the existence theorem of 
\eqref{equation:pde}-\eqref{equation:data} 
especially in the setting that $a\neq 0$, $b\in \RR$ and 
$N$ is a general K\"ahler manifold. 
To state our result, 
we now introduce some definitions related to Sobolev spaces for mappings. 
\begin{definition}
\label{definition:sobolev}
Let $(N,g)$ be a Riemannian manifold, 
and let $\mathbb{N}$ be the set of positive integers. 
For $m\in\mathbb{N}\cup\{0\}$, 
a bundle-valued Sobolev space of mappings is defined by  
$$
 H^{m+1}(\TT;N)
 =
 \{
u 
\ \vert \ 
u(x)\in N  
\ \text{a.e.} \ 
x \in \TT,
\ \text{and} \
u_x \in H^m(\TT;TN) 
\},
$$
where $u_x \in H^m(\TT;TN)$ means that $u_x$ satisfies 
$$
\lVert{u_x}\rVert_{H^{m}(\TT;TN)}^2
=
\sum_{j=0}^m
\int_0^1 
g_{u(x)}(\nabla_x^ju_x(x),\nabla_x^ju_x(x))dx
<
+\infty.
$$
Moreover, let $I$ be an interval in $\RR$, 
and let $w:(N,g)\to(\RR^d,g_0)$ be an isometric embedding. 
Here $g_0$ is the standard Euclidean metric on $\RR^d$.  
We say that $u\in C(I;H^{m+1}(\TT;N))$ 
if $u(t)\in H^{m+1}(\TT;N)$ for all $t\in{I}$ and 
$w{\circ}u\in C(I;H^{m+1}(\TT;\RR^d))$, 
where $C(I;H^{m+1}(\TT;\RR^d))$ is 
the set of usual Sobolev space valued continuous functions on $I$. 
\end{definition}
Our main results are the following.
\begin{theorem}
\label{theorem:eo}
Let $m\geqslant2$ be an integer. 
Then for any $u_0{\in}H^{m+1}(\TT;N)$, 
there exists a constant $T>0$ 
depending only on $a$, $b$, $N$ and 
$\lVert{u_{0x}}\rVert_{H^2(\TT;TN)}$ 
such that the initial value problem 
\eqref{equation:pde}-\eqref{equation:data} 
possesses a unique solution 
$u{\in}C([-T,T];H^{m+1}(\TT;N))$. 
\end{theorem}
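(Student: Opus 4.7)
My strategy is parabolic regularization, uniform a priori estimates exploiting the K\"ahler identities, compactness to pass to the limit, and a lower-order energy estimate for uniqueness. First I would embed $(N,g)\hookrightarrow(\RR^d,g_0)$ isometrically via Nash's theorem, and for each $\ep>0$ consider the regularized problem
\[
u^\ep_t
=
-\ep\,\nabla_x^4 u^\ep_x
+a\,\nabla_x^2 u^\ep_x
+J_{u^\ep}\nabla_x u^\ep_x
+b\,g_{u^\ep}(u^\ep_x,u^\ep_x)u^\ep_x,
\qquad u^\ep(0)=u_0.
\]
Its leading part $-\ep\,\p_x^4$ is strongly parabolic, so classical parabolic theory in the ambient space produces a unique local solution $u^\ep\in C([0,T_\ep];H^{m+1}(\TT;N))$.

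The core of the argument is to extend $u^\ep$ to a common interval $[0,T]$ and bound $\lVert u^\ep_x(t)\rVert_{H^m(\TT;TN)}$ uniformly in $\ep$. For $0\leqslant k\leqslant m$ I would differentiate $\lVert\nabla_x^k u^\ep_x\rVert_{L^2}^2$ in time, using $\nabla_x u_t=\nabla_t u_x$ and $[\nabla_t,\nabla_x]V=R(u_t,u_x)V$. Three top-order cancellations are fundamental: the dispersive term contributes $2a\int g(\nabla_x^k u_x,\nabla_x^{k+3}u_x)\,dx=0$ by two integrations by parts; the Schr\"odinger term contributes $2\int g(\nabla_x^k u_x,J_u\nabla_x^{k+2}u_x)\,dx=0$ because $\nabla J=0$ and $J$ is $g$-skew-symmetric; and the regularizer yields the favorable $-2\ep\lVert\nabla_x^{k+2}u^\ep_x\rVert_{L^2}^2\leqslant 0$.

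The genuinely obstructing contributions come from (i) the curvature term $R(u_t,u_x)=aR(\nabla_x^2u_x,u_x)+R(J_u\nabla_xu_x,u_x)$ produced by the commutators of $\nabla_t$ with $\nabla_x^k$, in which $u_t$ carries three spatial derivatives through $a\,\nabla_x^2u_x$, and (ii) the top-order part of $\nabla_x^{k+1}[g(u_x,u_x)u_x]$. Both yield integrals whose derivative count exceeds what $\lVert u^\ep_x\rVert_{H^m}$ controls, so the plain energy is not monotone and naive integration by parts does not close. To close the estimate I would introduce a modified energy
\[
\widetilde{E}_m(t)
=
\sum_{k=0}^{m}\lVert\nabla_x^k u^\ep_x\rVert_{L^2}^2
+\sum_{k=1}^{m}\int_0^1 \Lambda_k\bigl(u^\ep,u^\ep_x,\nabla_x u^\ep_x,\ldots,\nabla_x^{k-1}u^\ep_x\bigr)dx,
\]
whose tensorial correctors $\Lambda_k$ are constructed from $g$, $J$, and $R$ so that their time derivatives precisely annihilate the top-order offending pieces through $\nabla J=0$, $J^2=-\mathrm{Id}$, and the first Bianchi identity. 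The remaining terms are then controlled by a polynomial in $\lVert u^\ep_x\rVert_{H^m}$ with coefficients depending only on $\lVert u^\ep_x\rVert_{H^2}$, using the Sobolev embedding $H^1(\TT)\hookrightarrow L^\infty(\TT)$ and classical commutator estimates; Gronwall then gives the uniform bound on $[0,T]$ with $T$ as stated. Designing these correctors $\Lambda_k$ is the true obstacle of the proof.

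With the uniform bound in hand, $\{u^\ep\}$ is bounded in $L^\infty([0,T];H^{m+1})$ and $\{u^\ep_t\}$ in $L^\infty([0,T];H^{m-2})$; Aubin--Lions compactness then supplies a subsequence converging strongly in $C([0,T];H^{m'})$ for any $m'<m+1$ to a map $u$, which is $N$-valued because $N$ is closed in $\RR^d$, and passage to the limit in the equation is routine. Uniqueness follows from an $L^2$ energy estimate for the difference of two solutions in the ambient space: the same cancellation mechanism, applied at the lower derivative level, yields a Gronwall inequality forcing the difference to vanish.
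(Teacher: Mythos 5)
Your overall architecture (Nash embedding, fourth\nobreakdash-order parabolic regularization, uniform $H^m$ bounds, compactness, lower\nobreakdash-order uniqueness estimate) matches the paper's, but the central step is left unexecuted, and the way you frame it is not right. You claim that after the three standard cancellations the remaining contributions from $R(u_t,u_x)$ with $u_t\sim a\nabla_x^2u_x$ and from $\nabla_x^{k+1}[g(u_x,u_x)u_x]$ ``exceed what $\lVert u_x\rVert_{H^m}$ controls,'' so that ``the plain energy is not monotone and naive integration by parts does not close,'' and you propose to repair this with correctors $\Lambda_k$ which you never construct, conceding that ``designing these correctors is the true obstacle.'' In fact no correctors are needed: the plain bundle energy closes. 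The worst cubic term $2b\int g\bigl(g(\nabla_x^{l+1}u_x,u_x)u_x,\nabla_x^lu_x\bigr)dx$ reproduces its own negative after one integration by parts (so it equals lower-order terms), the term $\int g\bigl(g(u_x,u_x)\nabla_x^{l+1}u_x,\nabla_x^lu_x\bigr)dx$ is a total derivative up to lower order, and the worst curvature term $a\int g\bigl(R(\nabla_x^{l+1}u_x,u_x)u_x,\nabla_x^lu_x\bigr)dx$ is handled by the symmetry $g(R(X,Y)Z,W)=g(R(W,Z)Y,X)$ followed by integration by parts, which again returns the same integral with the opposite sign plus terms involving only $\nabla R$ and lower derivatives. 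Since you neither carry out this computation nor exhibit the $\Lambda_k$ that would replace it, the a priori estimate --- and with it the whole existence argument --- is a genuine gap.

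Two further points. First, your uniqueness argument is too thin: for two maps into $w(N)$ the difference $z=u-v$ is not a section of a single pullback bundle, so ``the same cancellation mechanism at lower derivative level'' is not available; a pure $L^2$ estimate does not close (pairing $z$ with second-derivative differences still costs $\lVert z\rVert_{H^1}^2$), and the $H^1$ estimate requires genuinely new input, namely the tangential/normal decomposition $p(u),n(u)$, a frame $\nu^j$ of the normal bundle, and identities such as $n(v)z_{xx}=O(|z_x|)$ to tame the third-order terms in $f_a$. Second, minor issues: your regularizer is written as $-\ep\nabla_x^4u_x$ (fifth order in $u$, hence not parabolic) where you clearly intend $-\ep\nabla_x^3u_x$, i.e.\ principal part $-\ep\p_x^4$; and solving the regularized problem ``in the ambient space'' requires an argument (e.g.\ the nearest-point projection and a maximum principle in the tubular neighbourhood) to show the solution actually stays on $w(N)$ --- closedness of $N$ only helps once you already know the approximations are $N$-valued.
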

%%%%
%%%%
%%%%
Roughly speaking, Theorem~\ref{theorem:eo} says that 
\eqref{equation:pde}-\eqref{equation:data}
has a time-local solution in the usual Sobolev space $H^3$. 
In addition, 
$m=2$ is the smallest integer 
for \eqref{equation:pde} to make sense in the class
$C([-T,T];L^2(\TT;TN))$.
\par
We cannot prove any global existence results for 
\eqref{equation:pde}-\eqref{equation:data} 
independently of $a, b$ and $N$. 
In case $N=\mathbb{S}^2, a\neq 0$ and $b=a/2$, 
Nishiyama and Tani made use of some conservation laws 
to prove the global existence theorem in \cite{NT} and \cite{TN}. 
These conservation laws were discovered by Zakharov and Shabat 
in the study of the Hirota equation. 
See \cite{ZS} for the detail. 
If we take into account of the effect 
of the curvature of $N$ to the third term of \eqref{equation:pde}, 
we obtain the global existence theorem 
in the same way as \cite{NT} and \cite{TN} in case that the K\"ahler manifold
$N$ is a compact Riemann surface  
with constant Gaussian curvature as a $C^{\infty}$-manifold.  
We shall prove the following.
\begin{theorem}
\label{theorem:meo}
Let $(N,J,g)$ be a compact Riemann surface 
with constant Gaussian curvature $K$
and let $a\neq 0$ and $b=aK/2$. 
Then for any $u_0{\in}H^{m+1}(\TT;N)$ with an integer $m\geqslant2$, 
there exists a unique solution 
$u{\in}C(\RR;H^{m+1}(\TT;N))$
to the initial value problem 
\eqref{equation:pde}-\eqref{equation:data}.
\end{theorem}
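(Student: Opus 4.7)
The plan is to combine the local existence of Theorem~\ref{theorem:eo} with time-independent a priori bounds on $\|u_x(t)\|_{H^2(\TT;TN)}$. Since the existence time supplied by Theorem~\ref{theorem:eo} depends only on this norm, such a bound yields a global solution in $H^3(\TT;N)$ by a standard continuation argument. The remaining regularity $m\geqslant 3$ is then propagated by a classical Gronwall-type energy estimate on $E_m(t):=\sum_{j=0}^{m}\int_{\TT}g(\nabla_x^j u_x,\nabla_x^j u_x)\,dx$, the nonlinear contributions being controlled by the already available $H^2$-bound on $u_x$ via Gagliardo--Nirenberg interpolation. Thus the mathematical heart of the argument is the construction of three (quasi-)conservation laws controlling $\|u_x\|_{L^2}$, $\|\nabla_x u_x\|_{L^2}$, $\|\nabla_x^2 u_x\|_{L^2}$ successively.

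The first identity, $\frac{d}{dt}\int_{\TT}g_u(u_x,u_x)\,dx=0$, is automatic: substituting \eqref{equation:pde}, using compatibility of $\nabla$ with $g$, skew-adjointness of $J$ (so that $g(J_uV,V)=0$), and periodic integration by parts, each of the three terms on the right-hand side vanishes, and this identity holds for any K\"ahler $N$ and any $a,b$. The second identity is where the constant Gaussian curvature hypothesis enters. On a Riemann surface with constant curvature $K$ one has
\[
R(X,Y)Z = K\bigl(g(Y,Z)X - g(X,Z)Y\bigr),\qquad \nabla R = 0.
\]
In the computation of $\frac{d}{dt}\int g(\nabla_x u_x,\nabla_x u_x)\,dx$, the commutator $[\nabla_t,\nabla_x]u_x = R(u_t,u_x)u_x$ produces, via this algebraic identity, a cubic-in-$u_x$ expression that must cancel against a second cubic expression arising from the term $b\,g(u_x,u_x)u_x$ in \eqref{equation:pde} after integration by parts. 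The choice $b=aK/2$ is exactly what forces this cancellation, giving conservation of $\int g(\nabla_x u_x,\nabla_x u_x)\,dx$. Iterating once more at the next order yields a formula for $\frac{d}{dt}\int g(\nabla_x^2 u_x,\nabla_x^2 u_x)\,dx$ with residual lower-order quintic terms; these are removed by adding a curvature-dependent correction $\int_{\TT}\Phi(u_x,\nabla_x u_x)\,dx$ to form a \emph{modified} energy, whose exact conservation again relies on $\nabla R=0$ together with $b=aK/2$.

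The main obstacle is precisely the second-, and especially the third-order, identity: the computation of $\frac{d}{dt}\int g(\nabla_x^2 u_x,\nabla_x^2 u_x)\,dx$ on the pull-back bundle must track all commutators $[\nabla_t,\nabla_x^j]$, each of which is $u^{\ast}R$-valued, reconcile them with the Leibniz expansion of $\nabla_x^3[g(u_x,u_x)u_x]$, and single out the precise combination that is an exact $x$-derivative so that the corresponding integral vanishes. A small error in the algebraic bookkeeping leaves uncancelled top-order terms and destroys the estimate; hence the exact form of the modified energy has to be derived with some care. In the $\mathbb{S}^2$ case of Nishiyama--Tani the cancellations are automatic through the Zakharov--Shabat hierarchy of the Hirota equation \cite{NT,TN,ZS}, whereas here they must be produced directly from the two geometric identities above, so $b=aK/2$ is not a convention but a structural requirement of the method.
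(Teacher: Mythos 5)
Your overall scheme (local existence from Theorem~\ref{theorem:eo}, a time-independent a priori bound on $\|u_x(t)\|_{H^2(\TT;TN)}$, continuation, and propagation of higher regularity by a Gronwall estimate as in \eqref{equation:energyk}) is the same as the paper's. The gap is your second conservation law: $\int_{\TT}g(\nabla_xu_x,\nabla_xu_x)\,dx$ is \emph{not} conserved, and $b=aK/2$ does not produce the cancellation you describe. A direct computation using $\nabla_t\nabla_xu_x=\nabla_x^2u_t+R(u_t,u_x)u_x$ and \eqref{equation:constant} shows that the contribution of $a\nabla_x^2u_x$ to the commutator integral $\int_{\TT}g(R(u_t,u_x)u_x,\nabla_xu_x)\,dx$ already vanishes by itself after integration by parts, so the cubic term coming from $b\,g(u_x,u_x)u_x$ has nothing to cancel against; one is left with
\begin{equation*}
\frac{d}{dt}\|\nabla_xu_x\|_{L^2(\TT;TN)}^2
=6b\int_{\TT}g(u_x,\nabla_xu_x)\,g(\nabla_xu_x,\nabla_xu_x)\,dx
-2K\int_{\TT}g(J\nabla_xu_x,u_x)\,g(u_x,\nabla_xu_x)\,dx,
\end{equation*}
and neither term vanishes nor cancels the other for generic $u$ when $b=aK/2$. (This is consistent with the integrable $\mathbb{S}^2$ case: in the Zakharov--Shabat hierarchy the $H^1$-level invariant is $\int|q_x|^2-|q|^4$, not $\int|q_x|^2$.) Since your control of $\|\nabla_xu_x\|_{L^2}$, and hence of the correction terms in your modified $H^2$-energy, rests on this false identity, the ``successive'' chain breaks at the middle step.

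The paper avoids the $H^1$ level entirely: besides $\|u_x\|_{L^2}^2$ it conserves only the corrected $H^2$-level energy $E(u(t))$ of Lemma~\ref{lemma:conserved} (this is where $b=aK/2$ and $\nabla R=0$ are actually used), and then controls the intermediate norm by the Gagliardo--Nirenberg inequality $\|\nabla_xu_x\|_{L^2}\leqslant C\|\nabla_x^2u_x\|_{L^2}^{1/2}\|u_x\|_{L^2}^{1/2}$ together with the Sobolev bound \eqref{equation:so}, so that the quartic and sextic corrections in $E$ are absorbed and $X=\|\nabla_x^2u_x(t)\|_{L^2}$ satisfies $X^2\leqslant C(1+X^{3/2})$, hence is bounded uniformly in $t$. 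If you replace your $H^1$ conservation law by this interpolation step (or by a correctly corrected $H^1$-level invariant), the rest of your argument goes through as written.
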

We remark that Theorem~\ref{theorem:meo} 
generalizes the results of Nishiyama and Tani 
in \cite{NT} and \cite{TN}. 
In other words, 
the proof of Theorem~\ref{theorem:meo} will explain the reason 
why the global existence theorem of 
\eqref{equation:pde}-\eqref{equation:data} holds 
in case that $N=\mathbb{S}^2$. 
We would also like to recall that 
there are some classical examples of the 
compact Riemann surface
with constant Gaussian curvature.
Indeed, 
not only two-sphere $\mathbb{S}^2$ and 
flat-torus $\mathbb{T}^2=\RR^2/\mathbb{Z}^2$,
but also 
closed orientable surfaces $\varSigma_g$ with genus $g\geqslant 2$
admit the structure of such manifold.
The Gaussian curvature $K$ 
of them are $1$, $0$, and $-1$ respectively.
\par
Our method of the proof of Theorem~\ref{theorem:eo} is based on 
the geometric analysis and the classical energy method. 
We first remark that 
the local smoothing effect of dispersive equations breaks down 
because of the compactness of the domain $\TT$. 
See \cite{Doi} for the detail. 
Fortunately, however, 
\eqref{equation:pde} behaves like 
symmetric hyperbolic systems in some sense, 
and a geometric classical energy method works for \eqref{equation:pde}. 
More precisely, 
the K\"ahler condition $\nabla J \equiv 0$ 
and 
the properties of the Riemannian curvature tensor 
ensures that the loss of derivatives does not occur 
in geometric energy estimates. 
In other words, the solvable structure on the system of 
partial differential operators comes from 
the good geometric structures on $N$.  
In addition, 
we sometimes identify the unknown map $u$ with $w{\circ}u$ 
via the Nash isometric embedding 
$w:(N,J,g)\to(\RR^d,g_0)$ in our proof. 
It is more convenient to treat the system for $w{\circ}u$ 
than to treat \eqref{equation:pde} directly 
when we apply the standard argument of partial differential equations. 
\par
More concretely, 
the process of our proof of Theorem~\ref{theorem:eo} is as follows. 
We may assume that $N$ is compact  
since the initial curve $u_0$ lies on a compact subset in $N$ 
even if $N$ is noncompact.  
It suffices to solve the problem in the positive direction in time. 
First, we construct a sequence of approximate solutions 
$\{u^{\ep}\}_{\ep\in (0, 1)}$ to 
\begin{alignat}{2}
  u_t
& = 
  -
  \ep\,\nabla_x^3u_x
  +
  a\,\nabla_x^2u_x
  +
  J_u\nabla_xu_x
  +
  b\,g_u(u_x,u_x)u_x
&
  \quad\text{in}\quad
& (0,T_{\ep})\times{\TT},
\label{equation:pde2}
\\
  u(0,x)
& =
  u_0(x)
&
  \quad\text{in}\quad
& {\TT}.
\label{equation:data2}
\end{alignat}
By using a geometric orthogonal decomposition 
in the tubular neighbourhood of $w(N)$, 
we can check that a kind of maximum principle holds 
and $u^{\ep}(t)$ is $N$-valued. 
Secondly, 
the geometric classical energy estimates obtain 
the uniform estimate on the norm and the existence-time of
$\{u^{\ep }\}_{\ep\in (0, 1)}$. 
Then the standard compactness argument 
implies the local existence of solution 
$$
u{\in}C([0,T]\times\TT;N), 
\quad 
u_x
\in
C([0,T];H^{m-1}(\TT;TN)) 
\cap
L^\infty(0,T;H^m(\TT; TN))
$$
of \eqref{equation:pde}-\eqref{equation:data}, 
where $L^\infty$ is the usual Lebesgue space. 
Thirdly, we prove the uniqueness of solution
by the energy estimate in $H^1$ 
of the difference of two solutions with same initial data. 
We can choose a good moving frame of the normal bundle of 
$w(N)$ in $\RR^d$, 
and thus the classical energy method works 
for the difference of two solutions also. 
Finally, 
the continuity in time of 
$\nabla_x^mu_x$ in $L^2(\TT;TN)$ 
can be recovered by the standard argument. 
\par
The organization of this paper is as follows. 
Section \ref{section:note} is devoted to geometric preliminaries. 
In Section \ref{section:parabolic} 
we construct a sequence of approximate solutions 
by solving \eqref{equation:pde2}-\eqref{equation:data2}. 
In Section \ref{section:energy} 
we obtain uniform estimates of approximate solutions. 
In Section \ref{section:proof1} 
we complete the proof of Theorem \ref{theorem:eo}.
Finally, in Section \ref{section:global} 
we prove Theorem \ref{theorem:meo}. 
%%%%
%%%%
%%%%
%%%%%%
%%%%%% Section 2. Geometric Preliminaries
%%%%%%
\section{Geometric Preliminaries}
\label{section:note}
In this section, 
we introduce notation, 
recall the relationship between the bundle-valued Sobolev spaces 
and the standard Sobolev spaces, 
and obtain the formulation of a system equivalent to 
\eqref{equation:pde} 
used later in our proof.
%%%
%%%
\par
%%%
%%%Notations
%%% 
We will use 
$C=C(\cdot, \ldots, \cdot)$ 
to denote a positive constant depending on 
the certain parameters, 
geometric properties of $N$, 
et al.
%%%
%%%
%%% 
The partial differentiation is written by $\p$, 
or the subscript, e.g., 
$\p_xf$, $f_x$, 
to distinguish from the covariant derivative 
along the curve, e.g., $\nabla_x$.
%%%
%%%Isometric embedding
%%% 
Throughout this paper,   
$w$ is an isometric embedding mapping 
from $(N, J, g)$ into the standard Euclidean space $(\RR^d, g_0)$.
The existence of $w$ is ensured 
by the celebrated works of 
Nash \cite{Nash}, 
Gromov and Rohlin \cite{GR}, 
and related papers. 
%%%%
%%%%
\par
%%%%
%%%%Sobolev space
%%%% 
Let $u:\TT\to N$ be given. 
We denote $\Gamma(u^{-1}TN)$ 
by the space of sections of 
$u^{-1}TN$ over $\TT$.
For $V, W\in \Gamma(u^{-1}TN)$, 
define $L^2$-inner product of them by
$$
\int_{\TT}
g(V, W)
dx
=
\int_0^1
g_{u(x)}(V(x),W(x))
dx,
$$
and use the notation
$$
\|V\|_{L^2(\TT;TN)}^2
=
\int_{\TT}
g(V, V)
dx.
$$
Then the quantity
$\|u_x\|_{H^m(\TT;TN)}^2$ 
defined in Definition~\ref{definition:sobolev} 
is written as
$$
\|u_x\|_{H^m(\TT;TN)}^2
=
\sum_{j=0}^m
\|\nabla_x^ju_x\|_{L^2(\TT;TN)}^2.
$$
At this time we see that
$\|u_x\|_{H^m(\TT;TN)}<\infty$ 
if and only if 
$\|(w\circ u)_x\|_{H^m(\TT;\RR^d)}<\infty$. 
See, e.g., \cite[Section~1]{SZ} or \cite[Proposition~2.5]{MCGAHAGAN} 
on this equivalence.
Noting this equivalence and the compactness of $\TT$, 
we have
%%%
%%%
\begin{align}
  H^{m+1}(\TT;N)
& =
  \{ 
    \ u \ 
    | 
    \ u(x)\in N  
    \ \text{a.e.} \ x\in \TT,
    \ \text{and} \
    (w \circ u)_{x}\in H^m(\TT;\RR^d ) 
    \
  \}
\nonumber
\\  
& =
  \{ 
   \ u \ 
   | 
   \ u(x)\in N  
   \ \text{a.e.} \ x\in \TT,
   \ \text{and} \
   w \circ u \in H^{m+1}(\TT;\RR^d ) \
  \}.
\nonumber
\end{align}
%%%
%%%
%% 
We will make use of fundamental Sobolev space theory 
of $H^{m+1}(\TT;\RR^d)$ later in our proof.
%%%
%%%
\par
%%%
%%% 
Set $I=[-T,T]$ for $T>0$. 
The equation \eqref{equation:pde} 
is equivalent to a system 
for $w\circ u$ as follows.
%%%
%%%lemma:equiv
%%%
\begin{lemma}
\label{lemma:equiv}
Assume that
$m\geqslant 2$ 
is an integer. 
Then 
$u\in C(I;H^{m+1}(\TT;N))$ 
satisfies 
\eqref{equation:pde}-\eqref{equation:data} 
if and only if 
$v=w\circ u\in C(I;H^{m+1}(\TT;\RR^d))$ 
is $w(N)$-valued and satisfies
%%%
%%%
\begin{alignat}{2}
  v_{t}
 =& 
  a\{ 
     v_{xxx} 
     + [A(v)(v_x, v_x)]_x 
     + A(v)(v_{xx}+A(v)(v_x, v_x), v_x) 
   \}
\nonumber
\\
& + 
  \Tilde{J}_{v}(v_{xx}+A(v)(v_x, v_x))
  + b|v_x|^2 v_x
&
  \quad\mathrm{in}\quad
& I\times
  \TT,
\label{equation:pde3}
\\
  v(0,x)
 =&
  w \circ u_0(x)
&
  \quad\mathrm{in}\quad
& {\TT}.
\label{equation:data3}
\end{alignat}
%%%
%%%
Here,
$
A(v)(\cdot, \cdot ): 
T_vw(N)\times T_vw(N)\to (T_vw(N))^{\perp}
$
is the second fundamental form 
of $w(N)\subset \RR^d$ 
and 
$
\Tilde{J}_v
=
dw_{w^{-1}\circ v}J_{w^{-1}\circ v}dw^{-1}_v
$ 
on $T_vw(N)$ 
at $v\in w(N)$ respectively.
\end{lemma}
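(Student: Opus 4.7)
The plan is to apply $dw$ to both sides of the geometric equation \eqref{equation:pde} and push every covariant derivative through $dw$ by means of the Gauss formula, thereby translating the equation into one involving only the ambient $\p_x$-derivatives of $v=w\circ u$ together with the second fundamental form $A(v)$ and the transported almost complex structure $\Tilde{J}_v$. First I would record the function-space equivalence: by the discussion following Definition~\ref{definition:sobolev} and the references cited there, $u \in C(I;H^{m+1}(\TT;N))$ is the same as $v=w\circ u\in C(I;H^{m+1}(\TT;\RR^d))$ taking values in $w(N)$, and correspondingly $u_0\in H^{m+1}(\TT;N)$ is equivalent to $w\circ u_0\in H^{m+1}(\TT;\RR^d)$. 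It then suffices to verify that \eqref{equation:pde} and \eqref{equation:pde3} agree pointwise in $(t,x)$ after pushing forward by $dw$.

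The computational core is the Gauss formula: for any section $Y\in\Gamma(u^{-1}TN)$,
\[
\p_x(dw\,Y) \;=\; dw(\nabla_xY) \;-\; A(v)(v_x,\,dw\,Y),
\]
where the sign convention is fixed so that the normal-bundle-valued $A(v)$ is exactly the one appearing in the statement. Applied to $Y=u_x$ this gives $v_{xx}=dw(\nabla_x u_x)-A(v)(v_x,v_x)$, equivalently
\[
dw(\nabla_x u_x) \;=\; v_{xx}+A(v)(v_x,v_x).
\]
Applied a second time to $Y=\nabla_x u_x$, using the symmetry $A(v)(X,Y)=A(v)(Y,X)$ and substituting the previous identity, it yields
\[
dw(\nabla_x^2 u_x) \;=\; v_{xxx} + [A(v)(v_x,v_x)]_x + A(v)\bigl(v_{xx}+A(v)(v_x,v_x),\,v_x\bigr).
\]

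The remaining terms transform trivially: $dw(u_t)=v_t$; the isometry of $w$ gives $g_u(u_x,u_x)=|v_x|^2$ and hence $dw(b\,g_u(u_x,u_x)u_x)=b|v_x|^2 v_x$; and the K\"ahler compatibility $dw\circ J_u=\Tilde{J}_v\circ dw$, combined with the tangential identity above, gives $dw(J_u\nabla_x u_x)=\Tilde{J}_v(v_{xx}+A(v)(v_x,v_x))$. Substituting these into $dw$ applied to \eqref{equation:pde} reproduces \eqref{equation:pde3} exactly. For the converse, if $v\in C(I;H^{m+1}(\TT;\RR^d))$ is $w(N)$-valued and satisfies \eqref{equation:pde3}, then $u:=w^{-1}\circ v$ is well-defined in $C(I;H^{m+1}(\TT;N))$ by the space equivalence, and every identity above is reversible, so $u$ solves \eqref{equation:pde}-\eqref{equation:data}. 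The step I expect to require the most care is the second application of the Gauss formula: one must keep track simultaneously of the genuine ambient $\p_x$ of the normal-valued term $A(v)(v_x,v_x)$ and of the shape-operator contribution $A(v)(dw(\nabla_x u_x),v_x)$, and enforce the sign convention adopted in the statement consistently at each stage so that no curvature-type correction is accidentally dropped.
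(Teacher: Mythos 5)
Your proposal is correct and follows essentially the same route as the paper: push the equation forward by $dw$, obtain $dw_u(\nabla_xu_x)=v_{xx}+A(v)(v_x,v_x)$ and then iterate the Gauss formula to get the expression for $dw_u(\nabla_x^2u_x)$, handle the $J$ and cubic terms by the definition of $\Tilde{J}_v$ and the isometry of $w$, and invert via the injectivity of $dw$ for the converse. Your explicit attention to the sign convention in the Gauss formula (the paper's $A$ is the normal component with the sign making $dw_u(\nabla_xu_x)=v_{xx}+A(v)(v_x,v_x)$ hold) and to differentiating the normal-valued term $[A(v)(v_x,v_x)]_x$ matches exactly what the paper's identities \eqref{equation:20}--\eqref{equation:22} encode.
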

%%%
%%%
%%%%%
%%%%Proof of Lemma~\ref{lemma:equiv}
%%%%
\begin{proof}[Proof of Lemma~\ref{lemma:equiv}]
Suppose 
$u\in C(I;H^{m+1}(\TT;N))$ 
satisfies 
\eqref{equation:pde}-\eqref{equation:data}. 
Since $m\geqslant 2$, 
the mapping
$v=w\circ u: I\times \mathbb{T}\to w(N)$ 
satisfies
%%%
%%%
$$
  v_t
 = (w\circ u)_t 
 = dw_u(u_t)
 = a\, dw_u(\nabla_x^2u_x)
   +dw_u(J_u\nabla_xu_x)
   +b\,dw_u(g_u(u_x, u_x)u_x)
$$
in $C(I;L^2(\TT;\RR^d))$. 
Moreover we deduce
%%%
%%%%
\begin{alignat}{2}
 dw_u(\nabla_xu_x)
& =
 v_{xx}+A(v)(v_x, v_x),
\label{equation:20} 
\\
 dw_u(\nabla_x^2u_x)
& =
 [dw_u(\nabla_xu_x)]_x
 +
 A(v)(dw_u(\nabla_xu_x), v_x)
\nonumber 
\\
& =
 v_{xxx}+[A(v)(v_x,v_x)]_x
 +A(v)(v_{xx}+A(v)(v_x, v_x), v_x),
\label{equation:21}
\\
dw_u(g_u(u_x,u_x)u_x)
& =
 g(u_x,u_x)dw_u(u_x)
  =
 \lvert
 v_x
 \rvert^2
 v_x
\label{equation:22}
\end{alignat}
from 
the definition of the covariant derivative 
on $u^{-1}TN$
 and 
the isometricity of $w$. 
%%%
%%%
Combining 
\eqref{equation:20}, 
\eqref{equation:21} 
and 
\eqref{equation:22}, 
we obtain that $v$ solves 
\eqref{equation:pde3}-\eqref{equation:data3} 
 in the class 
$C(I; L^2(\TT;\RR^d))$.
%%%
\par
%%%
Conversely, suppose 
$v\in C(I;H^{m+1}(\TT;\RR^d))$ 
takes value in $w(N)$ and solves 
\eqref{equation:pde3}-\eqref{equation:data3}. 
Since $dw$ is injective, 
it immediately follows from the same calculus 
as above that 
$u=w^{-1}\circ v\in C(I;H^{m+1}(\TT;N))$ 
solves 
\eqref{equation:pde3}-\eqref{equation:data3} 
in 
$C(I;L^2(\TT;TN))$. 
\end{proof}
%%%
%%%
%%%
%%%Section3. Parabolic Regularization
%%%
\section{Parabolic Regularization}
\label{section:parabolic}
Assume that $N$ is compact 
in this section.
The aim of this section is 
to obtain a sequence 
$\{u^{\ep}\}_{\ep\in (0, 1)}$ 
solving 
\begin{alignat}{2}
  u_t
& = 
  -
  \ep\, \nabla_x^3u_x 
  +
  a\, \nabla_x^2u_x
  +
  J_u\nabla_xu_x
  +
  b\, g_u(u_x, u_x)u_x
&
  \quad\text{in}\quad
& (0,T_{\ep})\times{\TT},
\label{equation:pde4}
\\
  u(0,x)
& =
  u_0(x)
&
  \quad\text{in}\quad
& {\TT}
\label{equation:data4}
\end{alignat}
for each $\ep\in (0, 1)$, 
where $u=u^{\ep}(t,x)$ 
is also an $N$-valued unknown function of 
$(t,x)\in [0, T_{\ep}]\times \TT$, 
and $u_0$ is the same initial data as that of 
\eqref{equation:pde}-\eqref{equation:data} 
independent of $\ep\in (0, 1)$.
%%%%
%%%%
\par
%%%%
%%%% 
In the same way as Lemma~\ref{lemma:equiv}, 
\eqref{equation:pde4}-\eqref{equation:data4} 
is equivalent to  the following problem
\begin{alignat}{2}
  v_t
& = 
  -\ep v_{xxxx}
  +F(v)
&
  \quad\text{in}\quad
& (0,T_{\ep})\times{\TT},
\label{equation:pde5}
\\
  v(0,x)
& =
  w\circ u_0(x)
&
  \quad\text{in}\quad
& {\TT},
\label{equation:data5}
\end{alignat}
where 
$v=v^{\ep}(t,x)$ 
is a $w(N)$-valued unknown function of 
$(t,x)\in [0, T_{\ep}]\times \TT$. 
Here
\begin{align}
 F(v) 
 =
& -\ep
  \{ [A(v)(v_x, v_x)]_{xx}
     +[A(v)(v_{xx}+A(v)(v_x, v_x), v_x)]_x
\nonumber
\\
& \qquad 
    +A(v)(v_{xxx}+[A(v)(v_x,v_x)]_x
    + A(v)(v_{xx}+A(v)(v_x, v_x), v_x), v_x)
   \}
\nonumber
\\
& + a
  \{ v_{xxx} + [A(v)(v_x, v_x)]_x 
    + A(v)(v_{xx}+A(v)(v_x, v_x), v_x) 
  \}
\nonumber
\\
&
 + \Tilde{J}_v(v_{xx}+A(v)(v_x, v_x))
 + b|v_x|^2 v_x.
\nonumber 
\end{align}
For $F(v)$, notice that there exists 
$G\in C^{\infty}(\RR^{4d};\RR^d)$ 
such that
\begin{equation}
F(v)=G(v, v_x, v_{xx}, v_{xxx}),
\quad
G(v, 0, 0, 0)=0,
\nonumber
\end{equation}
for $v:\TT\to w(N)$. 
Note that
\eqref{equation:pde5} 
is a system of fourth-order parabolic equations 
for $w(N)$-valued function 
and 
represents the equality of sections of  $v^{-1}Tw(N)$.
We show the following.
%%%
%%%
%
%proposition:pr
%
\begin{proposition}
\label{proposition:pr}
Let $u_0\in H^{m+1}(\TT;N)$ with an integer $m\geqslant 2$. 
Then for each $\ep\in (0, 1)$, 
there exists a constant
$
T_{\ep}
=
T(\ep, a, b, N,
\| (w\circ u_0)_x   \|_{H^m(\TT;\RR^d)} )>0
$
such that 
\eqref{equation:pde5}-\eqref{equation:data5} 
has a unique solution 
$v=v^{\ep}\in C([0, T_{\ep}]; H^{m+1}(\TT;\RR^d))$ 
satisfying 
$v(t,x)\in w(N)$ 
for all 
$(t,x)\in [0,T_{\ep}]\times \TT$.
\end{proposition}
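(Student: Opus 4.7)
The plan is to work with the equivalent formulation \eqref{equation:pde5}-\eqref{equation:data5}, which is a fourth-order semilinear parabolic system on $\RR^d$, and to proceed in two stages: first solve this system in the unconstrained space $C([0,T_{\ep}];H^{m+1}(\TT;\RR^d))$, then verify that the constructed solution actually takes values in the submanifold $w(N)$. The uniqueness on the constrained side will then follow from the uniqueness on the unconstrained side.

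For the first stage, I would exploit the strong smoothing provided by the linear part $-\ep\,\partial_x^4$: it generates an analytic semigroup $\{S_{\ep}(t)\}_{t\geqslant 0}$ on $H^s(\TT;\RR^d)$ with the usual gain of four spatial derivatives. The nonlinearity $F(v)=G(v,v_x,v_{xx},v_{xxx})$ involves only up to three derivatives of $v$, so it is subcritical relative to the linear part. Since $A(v)$ and $\Tilde{J}_v$ are only defined for $v\in w(N)$, I would first extend them from $w(N)$ to all of $\RR^d$ by composing with the nearest-point projection $\pi\colon U\to w(N)$ on a tubular neighborhood $U$ and multiplying by a smooth compactly supported cutoff; this produces an extension $\Tilde{F}\in C^{\infty}(\RR^{4d};\RR^d)$ that agrees with $F$ on $w(N)$. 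A standard contraction mapping argument applied to the Duhamel formula
\begin{equation*}
v(t)=S_{\ep}(t)(w\circ u_0)+\int_0^t S_{\ep}(t-s)\Tilde{F}(v(s))\,ds
\end{equation*}
on a closed ball in $C([0,T_{\ep}];H^{m+1}(\TT;\RR^d))$ then yields a unique local solution, the existence time $T_{\ep}$ depending on $\ep$, $a$, $b$, $N$, and $\|(w\circ u_0)_x\|_{H^m(\TT;\RR^d)}$. A routine bootstrap propagates the $H^{m+1}$ regularity forward.

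The main obstacle is to show that $v(t,x)\in w(N)$. The basic observation is that the derivation of \eqref{equation:pde5} from the geometric equation \eqref{equation:pde4} guarantees that whenever $v\in w(N)$, the right-hand side $-\ep v_{xxxx}+F(v)$ is tangent to $w(N)$ at $v$; equivalently, its component normal to $w(N)$ vanishes on $w(N)$. To quantify this, I would introduce the normal deviation
\begin{equation*}
E(t)=\int_{\TT}\bigl|\,v(t,x)-\pi(v(t,x))\,\bigr|^2\,dx,
\end{equation*}
which satisfies $E(0)=0$. Differentiating in $t$, substituting the PDE, and integrating by parts the fourth-order term, I would aim to show that every term in $dE/dt$ carries at least one factor of $v-\pi(v)$, by Taylor-expanding $\pi$ and $\Tilde{F}$ about $w(N)$ and invoking the tangency property above. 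This leads to a differential inequality of the form
\begin{equation*}
\frac{dE}{dt}\leqslant C\bigl(\ep,\|v\|_{H^{m+1}(\TT;\RR^d)}\bigr)\,E(t),
\end{equation*}
and Gronwall's inequality together with $E(0)=0$ then forces $E\equiv 0$, so $v(t,x)\in w(N)$ throughout $[0,T_{\ep}]$.

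The hardest step will be the detailed bookkeeping in this last energy estimate, in particular controlling $\int_{\TT}\langle v-\pi(v),\,v_{xxxx}\rangle\,dx$ after redistribution of derivatives: one must confirm that after integration by parts and after inserting the normal-vanishing property of $\Tilde{F}$ on $w(N)$, the resulting expression is indeed bounded by $E(t)$ times a constant depending only on already-controlled Sobolev norms of $v$. This uses in an essential way the compactness of $N$ (so that $\pi$ and all of its derivatives are bounded on $U$) and the smoothness of the extended nonlinearity $\Tilde{F}$, and it is the sole place where the geometric structure of the ambient embedding enters in a non-routine way.
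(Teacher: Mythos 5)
Your proposal is correct and follows essentially the same route as the paper: a contraction argument for the modified problem $v_t=-\ep v_{xxxx}+F(\pi\circ v)$ using the smoothing of $S_\ep(t)$, followed by an $L^2$ estimate on the normal deviation $\rho\circ v=v-\pi(v)$ that exploits the tangency of $-\ep(\pi\circ v)_{xxxx}+F(\pi\circ v)$ to $w(N)$. The only cosmetic difference is at the last step: the surviving term $-\ep\langle\rho\circ v,(\rho\circ v)_{xxxx}\rangle=-\ep\|(\rho\circ v)_{xx}\|_{L^2}^2$ is not bounded in absolute value by a constant times $E(t)$ as your bookkeeping plan suggests, but it is nonpositive, so the paper concludes $dE/dt\leqslant 0$ directly and no Gronwall argument is needed.
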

For the solution $v$ in Proposition~\ref{proposition:pr}, 
the equivalence between 
\eqref{equation:pde4}-\eqref{equation:data4} 
and 
\eqref{equation:pde5}-\eqref{equation:data5} 
implies 
that
$u=w^{-1}\circ v$ 
solves 
\eqref{equation:pde4}-\eqref{equation:data4}.
%%%%
%%%%
%%% 
The proof of this proposition consists of 
the following two steps.
We first construct the solution of 
\eqref{equation:pde5}-\eqref{equation:data5} 
whose image are contained in a tubular neighbourhood 
of $w(N)$ in $\RR^d$. 
Namely, for $\delta>0$, 
let $(w(N))_{\delta}$ be a $\delta$-tubular neighbourhood 
of  $w(N)\subset \RR^d$ defined by 
$$
(w(N))_{\delta}=
\left\{
Q=(q, X)\in \RR^d \ | \ 
q\in w(N), \
X\in (T_{q}w(N))^{\perp}, \ 
|X|< \delta \
\right\},
$$ 
and let $\pi :(w(N))_{\delta}\to w(N)$ 
be the nearest point projection map defined by 
$\pi(Q)=q$ for $Q=(q,X)\in (w(N))_{\delta}$. 
Since $w(N)$ is compact, 
it is well-known that, 
for any sufficiently small $\delta$, 
$\pi$ exists and is smooth. 
We fix such small $\delta$, 
and 
construct a unique time-local solution of 
\eqref{equation:pde5}-\eqref{equation:data5} 
in the class
%%%
$$
Y_T^{m,\delta}
 =
  \{
   v\in 
   C([0, T];H^{m+1}(\TT;\RR^d) )
   \ | \
   \| v-w\circ u_0 \|_{L^{\infty}((0,T)\times \TT;\RR^d)}
   \leqslant 
   \delta/2
  \}
$$
%%%
for sufficiently small $T>0$. 
The second step is to check that this solution is actually $w(N)$-valued 
by using a kind of maximum principle. 
In short, it suffices to show the following two lemmas.
%%%
%%%
%
%lemma:calculus
%
\begin{lemma}
\label{lemma:calculus}
For each $\varepsilon\in (0,1)$, 
there exists a constant
$
T_{\ep}>0
$
depending on 
$
\ep, a, b, N 
$
and
$
\| (w\circ u_0)_x   \|_{H^m(\TT;\RR^d)} 
$
and there exists a unique solution 
$v\in Y_{T_{\ep}}^{m,\delta}$
to
\begin{alignat}{2}
  v_t
& = 
  -\ep v_{xxxx}+F(\pi\circ v)
&
  \quad\mathrm{in}\quad
& (0,T_{\ep})\times{\TT},
\label{equation:pde6}
\\
  v(0,x)
& =
  w\circ u_0(x)
&
  \quad\mathrm{in}\quad
& {\TT}.
\label{equation:data6}
\end{alignat}
\end{lemma}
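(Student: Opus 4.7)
The plan is to recast \eqref{equation:pde6}-\eqref{equation:data6} in mild form
$$
\Phi(v)(t):=S_\varepsilon(t)(w\circ u_0)+\int_0^t S_\varepsilon(t-s)\,F(\pi\circ v(s))\,ds
$$
and apply the Banach fixed-point theorem, driven by the parabolic smoothing of the linear semigroup $S_\varepsilon(t)=e^{-\varepsilon t\partial_x^4}$ on $H^{m+1}(\TT;\RR^d)$ generated by $-\varepsilon\partial_x^4$. Because $w(N)$ is compact, $\pi$ is smooth on the tubular neighbourhood $(w(N))_\delta$, and the constraint defining $Y_T^{m,\delta}$ keeps $v(t,x)$ within $\delta/2$ of $w\circ u_0(x)\in w(N)$, so $\pi\circ v$ is well defined and $F(\pi\circ v)$ makes sense as a smooth expression in $(v,v_x,v_{xx},v_{xxx})$.

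The two ingredients that will make $\Phi$ contract for small $T_\varepsilon$ are, first, the Fourier-series smoothing estimate
$$
\|\partial_x^k S_\varepsilon(t) f\|_{L^2(\TT;\RR^d)}\leqslant C(\varepsilon t)^{-k/4}\|f\|_{L^2(\TT;\RR^d)},\qquad k=0,1,2,3,
$$
so that Duhamel in $H^{m+1}$ of an $H^{m-2}$-source costs the integrable factor $(\varepsilon(t-s))^{-3/4}$; and, second, using the Sobolev embedding $H^{m+1}(\TT)\hookrightarrow C^m(\TT)$ available since $m\geqslant2$, Moser-type product and composition estimates yielding
$$
\|F(\pi\circ v_1)-F(\pi\circ v_2)\|_{H^{m-2}(\TT;\RR^d)}\leqslant C(\varepsilon,a,b,N,R)\,\|v_1-v_2\|_{H^{m+1}(\TT;\RR^d)}
$$
for all $v_1,v_2\in Y_T^{m,\delta}$ with $\|v_i\|_{C([0,T];H^{m+1})}\leqslant R$, together with a corresponding polynomial-type bound on $\|F(\pi\circ v)\|_{H^{m-2}}$. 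Chaining these through Duhamel yields
$$
\|\Phi(v)\|_{C([0,T];H^{m+1})}\leqslant\|w\circ u_0\|_{H^{m+1}}+C\varepsilon^{-3/4}T^{1/4}\bigl(1+\|v\|_{C([0,T];H^{m+1})}^{q}\bigr)
$$
for some integer $q$, together with the analogous Lipschitz estimate whose constant is $C\varepsilon^{-3/4}T^{1/4}$.

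Taking $R:=2\|w\circ u_0\|_{H^{m+1}}$ and choosing $T_\varepsilon=T(\varepsilon,a,b,N,\|(w\circ u_0)_x\|_{H^m})>0$ small enough that the above Lipschitz constant is at most $1/2$ and the pointwise smallness $\|\Phi(v)-w\circ u_0\|_{L^\infty((0,T_\varepsilon)\times\TT;\RR^d)}\leqslant\delta/2$ holds (the latter via continuity at $t=0$ and the embedding $H^{m+1}\hookrightarrow C^0$), $\Phi$ becomes a strict contraction on the closed ball of radius $R$ inside $Y_{T_\varepsilon}^{m,\delta}$, and Banach's theorem supplies the desired fixed point $v=v^\varepsilon$. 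Uniqueness throughout $Y_{T_\varepsilon}^{m,\delta}$ follows by a standard continuation argument, since any two solutions agree on an initial subinterval where both lie in the contraction ball and the equation is semilinear of finite order. The main obstacle I expect is the Lipschitz estimate for $v\mapsto F(\pi\circ v)$ from $H^{m+1}$ into $H^{m-2}$: since $F$ carries three spatial derivatives and is composed with the smooth but nonlinear map $\pi$, one has to organise the Moser calculus term by term so that at most one high-derivative factor appears per product while the remaining factors sit in the $L^\infty$-controlled class coming from the Sobolev embedding. Once that bound is in hand, everything else reduces to bookkeeping of powers of $\varepsilon$ and $T$ in the smoothing inequality.
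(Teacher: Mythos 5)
Your proposal follows essentially the same route as the paper: the mild (Duhamel) formulation with the semigroup $S_{\ep}(t)$, the smoothing estimate gaining three derivatives at the integrable cost $(\ep t)^{-3/4}$, Moser-type bounds and a Lipschitz estimate for $v\mapsto F(\pi\circ v)$ from $H^{m+1}$ into $H^{m-2}$, and a contraction on a closed ball inside $Y_{T_{\ep}}^{m,\delta}$ for small $T_{\ep}$. The paper likewise treats the nonlinear and uniqueness steps as standard and omits the details you flag as the main technical burden, so your outline is correct and consistent with its argument.
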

%
%lemma:maximum
%
\begin{lemma}
\label{lemma:maximum}
Fix $\varepsilon\in (0,1)$. 
Assume that 
$v=v^{\ep}\in Y_{T_{\ep}}^{m,\delta}$ 
solves \eqref{equation:pde6}-\eqref{equation:data6}. 
Then $v(t,x)\in w(N)$ 
for all 
$(t,x)\in [0,T_{\ep}]\times \TT$, 
thus $v$ solves \eqref{equation:pde5}-\eqref{equation:data5}. 
\end{lemma}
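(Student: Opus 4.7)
The plan is to exploit that $F$ was constructed precisely so that the extension through $\pi$ does not disturb the tangency of the right-hand side to $w(N)$, and to monitor the normal deviation of $v$ via a one-line $L^2$-energy estimate. Set
$$n(t,x):=v(t,x)-\pi(v(t,x)),$$
which, by the defining property of the tubular projection, is pointwise normal to $w(N)$ at $\pi(v(t,x))$. Since $v(0,\cdot)=w\circ u_0$ is $w(N)$-valued, $n(0,x)\equiv 0$, and the lemma reduces to showing $n\equiv 0$.

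The key structural observation is the following. For every smooth $w(N)$-valued map $\gamma:\TT\to w(N)$, the section
$$T(\gamma):=-\ep\gamma_{xxxx}+F(\gamma)$$
is tangent to $w(N)$ at $\gamma$. This is read off from the derivation of \eqref{equation:pde5} given in Lemma~\ref{lemma:equiv}: writing $\gamma=w\circ\mu$ with $\mu:\TT\to N$ and applying \eqref{equation:20}, \eqref{equation:21}, \eqref{equation:22} together with the analogous formula for $dw_\mu(\nabla_x^3\mu_x)$, one identifies
$$T(\gamma)=dw_\mu\bigl(-\ep\nabla_x^3\mu_x+a\nabla_x^2\mu_x+J_\mu\nabla_x\mu_x+bg_\mu(\mu_x,\mu_x)\mu_x\bigr),$$
which lies in $T_\gamma w(N)$ as the image under $dw_\mu$ of a section of $\mu^{-1}TN$. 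Applying this with $\gamma=\pi\circ v$ and using $v_{xxxx}=n_{xxxx}+(\pi\circ v)_{xxxx}$, equation \eqref{equation:pde6} rewrites as
$$v_t=-\ep\,n_{xxxx}+T(\pi\circ v),\qquad T(\pi\circ v)\in T_{\pi(v)}w(N).$$

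Now I would compute the evolution of $\int_{\TT}|n|^2\,dx$. Since $n_t=v_t-d\pi_v(v_t)$ and $d\pi_v(v_t)\in T_{\pi(v)}w(N)$ is orthogonal to $n$, pointwise orthogonality gives $n\cdot n_t=n\cdot v_t$; the tangential piece $T(\pi\circ v)$ is likewise annihilated by the inner product with $n$; and integration by parts twice on $\TT$ converts the surviving fourth-order term into a non-positive dissipation:
$$\frac{d}{dt}\int_{\TT}|n|^2\,dx=2\int_{\TT}n\cdot v_t\,dx=-2\ep\int_{\TT}n\cdot n_{xxxx}\,dx=-2\ep\int_{\TT}|n_{xx}|^2\,dx\leq 0.$$
Combined with $n(0,\cdot)\equiv 0$, this forces $\int|n(t,\cdot)|^2dx\equiv 0$, whence $v(t,x)=\pi(v(t,x))\in w(N)$ throughout $[0,T_{\ep}]\times\TT$; in particular $F(\pi\circ v)=F(v)$, so $v$ solves \eqref{equation:pde5}-\eqref{equation:data5}.

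The one delicate point is justifying this computation at the stated regularity $v\in C([0,T_{\ep}];H^{m+1}(\TT;\RR^d))$ with $m\geqslant 2$, where $v_{xxxx}$ is merely a distribution and $v_t$ is not \emph{a priori} in $L^2$. I would handle this by invoking the parabolic smoothing property of the fourth-order system \eqref{equation:pde6}: standard regularity theory (or a bootstrap on the mild-solution representation used to construct $v$ in Lemma~\ref{lemma:calculus}) upgrades $v$ to $C^\infty((0,T_{\ep})\times\TT;\RR^d)$, so that all the identities above hold classically for $t>0$, and the conclusion $n\equiv 0$ on $(0,T_{\ep}]\times\TT$ extends to $t=0$ by continuity. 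This is the step where compactness of $w(N)$ and smoothness of $\pi$ on $(w(N))_\delta$ enter implicitly, and it is the only genuine obstacle; the geometric heart of the argument is the one-line algebraic identity $v_t=-\ep\,n_{xxxx}+T(\pi\circ v)$ isolated above.
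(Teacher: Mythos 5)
Your proposal is correct and follows essentially the same route as the paper: both define the normal deviation $v-\pi\circ v$ (the paper's $\rho\circ v$, your $n$), observe that $-\ep(\pi\circ v)_{xxxx}+F(\pi\circ v)$ is a section of $(\pi\circ v)^{-1}Tw(N)$ and hence orthogonal to the deviation, and close with the identity $\tfrac{d}{dt}\|n\|_{L^2}^2=-2\ep\|n_{xx}\|_{L^2}^2\leqslant 0$. Your added remark on justifying the computation at the regularity $H^{m+1}$ via parabolic smoothing addresses a point the paper leaves implicit, but does not change the argument.
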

%%%%
%%%%
%%%%
%%Proof of Lemma~\ref{lemma:calculus}
%%%%
%%%%
%%%%
\begin{proof}[Proof of Lemma~\ref{lemma:calculus}] 
The idea of the proof is due to the contraction mapping argument. 
\par
Let $L$ be a nonlinear map defined by 
$$
Lv(t)
=
S_{\ep}(t)v_0 
+
\int_0^tS_{\ep}(t-s)F(\pi\circ v)(s)ds,
$$
where 
$v_0=w\circ u_0$ 
and for $\psi\in H^{m+1}(\TT;\RR^d)$
$$
S_{\ep}(t)\psi(x)
=
\sum_{n=-\infty}^{\infty}
e^{-\ep t(2\pi n)^4 + 2\pi inx}
\int_0^1
e^{-2\pi iny}\psi (y)dy
$$
is the solution of the linear problem 
associated to 
\eqref{equation:pde6}-\eqref{equation:data6}.
Set $M=\|v_{0x}\|_{H^m(\TT;\RR^d)}$, 
and 
define the space
$$
Z_T^{m,\delta}
 =
  \{
   v\in 
   Y_T^{m,\delta} \ | \
   \| v_x  \|_{L^{\infty}(0, T; H^m(\TT;\RR^d))}
   \leqslant
   2M
  \},
$$
which is a closed subset of the Banach space 
$C([0, T]; H^{m+1}(\TT;\RR^d))$. 
To complete the proof,
we have only to show that the map $L$ has a unique fixed point 
in $Z_{T_{\ep}}^{m,\delta}$ for sufficiently small $T_{\ep}$, 
since the uniqueness in the whole space 
$Y_{T_{\ep}}^{m,\delta}$ follows by similar and standard argument.
%%
%%
%%%
\par
The operator $-\ep\p^4_x$ 
gains the regularity of order $3$, 
since 
$
\ep^{j/4}t^{j/4}|n|^j
e^{-\ep t(2\pi n)^4}
$
is bounded 
for $j=0, 1, 2, 3$. 
In fact, there exists $C_1>0$ 
such that for any 
$\psi \in H^{m-2}(\TT;\RR^d)$
\begin{equation}
\| S_{\ep}(t)\psi \|_{H^{m+1}(\TT;\RR^d)}
 \leqslant
  C_1 \ep^{-3/4}t^{-3/4}
  \|\psi \|_{H^{m-2}(\TT;\RR^d)}
\label{equation:smoothing}
\end{equation}
holds for all $t\in [0, T]$.
%%%
%%%
\par
%%%
%%% 
On the other hand, 
if $v$ belongs to the class $Z_{T}^{m, \delta}$, 
we see $v(t)\in C(\TT; (w(N))_{\delta})$ 
and 
$\|v_x(t)\|_{H^m(\TT;\RR^d)}
\leqslant 
2M$
follows for all $t\in [0, T]$. 
Thus, noting the form of $F(v)$ and the compactness of $w(N)$, 
it is easy to check that there exists 
$C_2=C_2(a,b,M,N)>0$ such that 
\begin{align}
  \| F(\pi\circ v)\|_{H^{m-2}(\TT;\RR^d)}
& \leqslant
  C_2\| v_x \|_{H^m(\TT;\RR^d)},
\label{equation:composite1}
\\
  \| F(\pi\circ u)-F(\pi\circ v)\|_{H^{m-2}(\TT;\RR^d)}
& \leqslant
  C_2
  \| u_x-v_x \|_{H^m(\TT;\RR^d)}
\label{equation:composite2}
\end{align}
for any 
$u, v\in Z_T^{m, \delta}$. 
%%%%
%%%%
\par 
Using the smoothing property 
\eqref{equation:smoothing} 
and the nonlinear estimates 
\eqref{equation:composite1} 
and 
\eqref{equation:composite2}, 
we can prove $L$ is a contraction mapping 
from $Z_{T_{\ep}}^{m, \delta}$ into itself 
provided that $T_{\ep}$ is sufficiently small. 
It is the standard argument, 
thus we omit the detail.
\end{proof}
%%%
%%%
%%%
%%%
%
%Proof of Lemma~\ref{lemma:maximum}
%
\begin{proof}[Proof of Lemma~\ref{lemma:maximum}] 
Suppose 
$v\in Y_{T_{\ep}}^{m,\delta}$ 
solves \eqref{equation:pde6}- \eqref{equation:data6}.
Define $\rho: (w(N))_{\delta}\to \RR^d$ by 
$\rho(Q)=Q-\pi(Q)$ for $Q\in (w(N))_{\delta}$. 
Then it follows from the definition that
$\left|
\rho \circ v(t,x)
\right|
=
\min_{Q'\in w(N)}
\left|
v(t,x)-Q'
\right|$
since $w(N)$ is compact.  
In addition, 
$(v(t)-w\circ u_0)$ 
belongs to 
$L^{\infty}(\TT;\RR^d)$ 
since 
$v\in Y_{T_{\ep}}^{m,\delta}$. 
Thus $\rho \circ v(t)$ 
makes sense in $L^2(\TT;\RR^d)$ for each $t$. 
To obtain that $v$ is $w(N)$-valued, we show 
$$
\| \rho \circ v(t)\|_{L^2(\TT;\RR^d)}^2
=
\left\langle
 \rho \circ v(t), 
  \rho \circ v(t) 
\right\rangle
=0
$$
for all $t\in[0,T_{\ep}]$.
%%% 
Since $\pi + \rho$ is identity on $(w(N))_{\delta}$,
\begin{equation}
 d\pi_v + d\rho_v
 =
 I_d 
\label{equation:decomposition}
\end{equation}
follows on $T_v(w(N))_{\delta}$, 
where $I_d$ is the identity. 
By identifying 
$T_v(w(N))_{\delta}$ 
with 
$\RR^d$, 
we see that
$v_t(t,x)\in T_{v(t,x)}(w(N))_{\delta}$
and
%%%%%
$
d\pi_{v}(v_t)(t,x)
\in 
T_{\pi\circ v(t,x)}w(N)
$ 
for each $(t,x)$. 
Thus 
$
\left\langle
\rho \circ v, d\pi_v ( v_t) 
\right\rangle
=0
$
holds. Using this relation and \eqref{equation:decomposition}, we deduce
$$
\frac{1}{2}\frac{d}{dt}
 \| \rho \circ v  \|_{L^2(\TT;\RR^d)}^2
=
\left\langle
  \rho \circ v, d\rho_v ( v_t) 
 \right\rangle
=
 \left\langle
  \rho \circ v, d\rho_v ( v_t)+d\pi_v ( v_t)
 \right\rangle
=
 \left\langle
  \rho \circ v, v_t 
 \right\rangle.
$$
Here let us notice that 
$(-\ep (\pi\circ v)_{xxxx}+F(\pi\circ v))(t)
\in 
\Gamma((\pi\circ v)^{-1}Tw(N))$ 
since $\pi\circ v(t)\in w(N)$, 
and thus this is perpendicular to 
$\rho\circ v(t)$. 
Noting this and substituting \eqref{equation:pde6}, we get
\begin{alignat}{2}
 \frac{1}{2}
 \frac{d}{dt}
 \| \rho \circ v  \|_{L^2(\TT;\RR^d)}^2
& = 
 \left\langle
 \rho \circ v, 
  -\ep v_{xxxx}
  +F(\pi\circ v) 
 \right\rangle
\nonumber
\\ 
&=
 \left\langle
 \rho \circ v, 
  - \ep(\rho\circ v)_{xxxx}
  - \ep(\pi\circ v)_{xxxx}
  +F(\pi\circ v) 
 \right\rangle
\nonumber
\\
& =
 \left\langle
 \rho \circ v, 
 - \ep(\rho\circ v)_{xxxx} 
 \right\rangle
\nonumber \\
&=
  - \ep
 \| (\rho \circ v)_{xx} \|_{L^2(\TT;\RR^d)}^2
\leqslant 
0,
\nonumber
\end{alignat}
%%%
%%%
which implies 
$
 \| \rho \circ v(t)\|_{L^2(\TT;\RR^d)}^2
 \leqslant
 \| \rho \circ v_0  \|_{L^2(\TT;\RR^d)}^2
 =0.
$ 
Hence 
$\rho \circ v(t)=0$ 
holds for all $t$. 
Thus $v(t)$ is $w(N)$-valued for all $t$. 
This completes the proof.
\end{proof}
%%%
%%%
%%%
%%%Section4. Geometric and classical energy estimate
%%%
\section{Geometric and classical energy estimate}
\label{section:energy}
Assume that $N$ is compact also in this section. 
Let $\{u^{\ep}\}_{\ep\in (0, 1)}$ 
be a sequence of solutions to 
\eqref{equation:pde4}-\eqref{equation:data4} 
constructed in Section~\ref{section:parabolic}.  
We will evaluate the bundle-valued 
Sobolev norms of 
$\{u_x^{\ep}\}_{\ep\in (0, 1)}$ 
and obtain the uniform estimate on the norm 
and the existence time.  
Our goal of this section is the following.
%%%
%%%
\begin{lemma}
\label{lemma:energy} 
Let 
$u_0\in H^{m+1}(\TT;N)$ 
with an integer 
$m\geqslant 2$, 
and let $\{u^{\ep}\}_{\ep\in (0, 1)}$ 
be a sequence of solutions to 
\eqref{equation:pde4}-\eqref{equation:data4}.  
Then there exists a constant  
$T>0$ 
depending only on 
$a, b, N, \|u_{0x}\|_{H^2(\TT;TN)}$  
such that 
$\{u_x^{\ep}\}_{\ep\in (0, 1)}$ 
is bounded in 
$L^{\infty}(0, T; H^m(\TT;TN))$.
\end{lemma}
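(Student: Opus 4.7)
The plan is to derive, for each integer $0 \leqslant k \leqslant m$, a differential inequality for the geometric energy
$$
E_k^{\ep}(t) := \|\nabla_x^k u_x^{\ep}(t)\|_{L^2(\TT;TN)}^2,
$$
close it first at the $H^2$ level autonomously, and then propagate the bound to higher $k$ by linear Gr\"onwall. Starting from the torsion-free identity $\nabla_t u_x = \nabla_x u_t$ and iterating the commutator $[\nabla_t, \nabla_x]V = R(u_t, u_x)V$, one obtains
$$
\nabla_t \nabla_x^k u_x^{\ep}
=
\nabla_x^{k+1} u_t^{\ep}
+
\sum_{j=0}^{k-1}
\nabla_x^{k-1-j}\bigl[R(u_t^{\ep}, u_x^{\ep})\,\nabla_x^j u_x^{\ep}\bigr].
$$
Substituting \eqref{equation:pde4} for $u_t^{\ep}$ and pairing with $\nabla_x^k u_x^{\ep}$ against the bundle inner product yields an identity whose highest-order pieces are $-\ep\,\nabla_x^{k+4} u_x$ (parabolic), $a\,\nabla_x^{k+3} u_x$ (third-order dispersive), $J_u\,\nabla_x^{k+2} u_x$ (using $\nabla_x J_u = 0$), a cubic lower-order piece from $b\,g(u_x, u_x) u_x$, and the curvature commutator sum.

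Three critical algebraic cancellations rescue the leading terms against the apparent loss of derivatives. First, metric compatibility $\nabla_x g \equiv 0$ together with two integrations by parts converts the parabolic contribution into $-\ep\,\|\nabla_x^{k+2} u_x^{\ep}\|_{L^2(\TT;TN)}^2 \leqslant 0$. Second, the third-order dispersive term, after two integrations by parts, reduces to $\tfrac{a}{2}\int_{\TT}\!\p_x\,g(\nabla_x^{k+1} u_x^{\ep}, \nabla_x^{k+1} u_x^{\ep})\,dx = 0$ by periodicity of $\TT$. Third, the K\"ahler condition $\nabla J \equiv 0$ lets $J_u$ commute with $\nabla_x$, and one integration by parts converts the Schr\"odinger-map contribution into $-\int g(J_u \nabla_x^{k+1} u_x^{\ep}, \nabla_x^{k+1} u_x^{\ep})\,dx = 0$ by the $g$-skew-adjointness of $J_u$. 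The cubic $b$-term, after one integration by parts on its top-order piece $b\,g(u_x, u_x)\,\nabla_x^{k+1} u_x^{\ep}$, contributes only quantities controlled by $\|u_x^{\ep}\|_{L^\infty}$ and $\|\nabla_x u_x^{\ep}\|_{L^\infty}$.

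The main obstacle is the curvature commutator sum. The worst contribution arises when all $k-1$ available spatial derivatives act on $R(u_t, u_x)$ with $u_t$ replaced by its highest non-$\ep$ component $a\,\nabla_x^2 u_x$, producing an order-$(k+1)$ term schematically equal to $a\!\int_{\TT}\! g(R(\nabla_x^{k+1} u_x^{\ep}, u_x^{\ep}) u_x^{\ep}, \nabla_x^k u_x^{\ep})\,dx$. The key observation is that the endomorphism $S(V) := R(V, u_x^{\ep}) u_x^{\ep}$ of $(u^{\ep})^{-1}TN$ is $g$-self-adjoint: by the pair symmetry $R(X,Y,Z,W) = R(Z,W,X,Y)$ combined with the anti-symmetries in each pair, one has $g(S V, W) = g(V, S W)$. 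Writing $S(\nabla_x^{k+1} u_x) = \nabla_x[S(\nabla_x^k u_x)] - (\nabla_x S)(\nabla_x^k u_x)$, integrating by parts, and using self-adjointness collapses the offending term to
$$
-\tfrac{a}{2}\!\int_{\TT}\! g\bigl((\nabla_x S)(\nabla_x^k u_x^{\ep}), \nabla_x^k u_x^{\ep}\bigr)\,dx,
$$
bounded by $C(N)\bigl(1+\|u_x^{\ep}\|_{L^\infty}+\|\nabla_x u_x^{\ep}\|_{L^\infty}\bigr)^{3}\,E_k^{\ep}$. The analogous $\ep$-curvature contribution from the $-\ep\,\nabla_x^3 u_x$ piece of $u_t$ produces an order-$(k+2)$ factor but carries the prefactor $\ep$; after the same self-adjoint integration by parts and the interpolation $\|\nabla_x^{k+1} u_x\|_{L^2}^2 \leqslant \|\nabla_x^k u_x\|_{L^2}\|\nabla_x^{k+2} u_x\|_{L^2}$, it is absorbed into $\tfrac{\ep}{2}\|\nabla_x^{k+2} u_x^{\ep}\|_{L^2}^2$ uniformly in $\ep \in (0,1)$. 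All remaining curvature sub-terms, obtained when derivatives distribute differently, are of strictly lower order and handled by Moser-type product estimates.

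Assembling these ingredients and using the bundle--classical equivalence $\|u_x\|_{H^j(\TT;TN)} \sim \|(w\circ u)_x\|_{H^j(\TT;\RR^d)}$ together with the Sobolev embedding $H^1(\TT;\RR^d) \hookrightarrow L^\infty(\TT;\RR^d)$, I would obtain for $k = 0, 1, 2$ the autonomous inequality
$$
\tfrac{d}{dt}\,\|u_x^{\ep}\|_{H^2(\TT;TN)}^2
\leqslant
\Phi\bigl(\|u_x^{\ep}\|_{H^2(\TT;TN)}\bigr),
$$
with $\Phi$ depending only on $a, b, N$. This determines a time $T > 0$ depending only on $a, b, N, \|u_{0x}\|_{H^2(\TT;TN)}$ on which $\|u_x^{\ep}\|_{H^2(\TT;TN)}$ stays bounded uniformly in $\ep$. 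For $3 \leqslant k \leqslant m$ the same scheme gives a linear Gr\"onwall inequality $\tfrac{d}{dt}E_k^{\ep} \leqslant C(a,b,N,\|u_{0x}\|_{H^2})\,E_k^{\ep} + (\text{terms controlled by } E_j^{\ep},\ j<k)$ on $[0,T]$, and iterating on $k$ bounds each $E_k^{\ep}$ in terms of the initial data. A standard continuation argument based on Proposition~\ref{proposition:pr} then extends each $u^{\ep}$ to $[0, T]$, yielding the claimed uniform $L^\infty(0,T;H^m(\TT;TN))$-bound.
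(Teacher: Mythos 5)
Your proposal is correct and follows essentially the same route as the paper: the commutator identity for $\nabla_t\nabla_x^k u_x$, the vanishing of the dispersive and Schr\"odinger contributions by integration by parts and the K\"ahler condition, the absorption of the $\ep$-curvature term into the parabolic dissipation, closing the $H^2$ estimate autonomously and then propagating to higher $k$ by linear Gr\"onwall plus continuation. Your packaging of the key cancellation as the $g$-self-adjointness of $S(V)=R(V,u_x)u_x$ is exactly the paper's use of the pair symmetry $g(R(X,Y)Z,W)=g(R(W,Z)Y,X)$ followed by integration by parts, just stated more compactly.
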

%%%
%%%
%%%
%%%
\begin{proof}[Proof of Lemma~\ref{lemma:energy}]
To obtain the desired uniform bounds, 
we show that
%%%%
%%%%
\begin{align}
& \frac{d}{dt}
  \| u_{x}^{\ep}(t) \|_{H^2(\TT;TN)}^2
  \leqslant
  C(a,b,N)\sum_{r=4}^8\| u_x^{\ep}(t) \|_{H^2(\TT;TN)}^r,
\label{equation:energy2}
\\
& \frac{d}{dt}
  \| u_x^{\ep}(t) \|_{H^k(\TT;TN)}^2
  \leqslant 
  C(a,b,N, 
  \| u_x^{\ep}(t) \|_{H^{k-1}(\TT;TN)}
  )
  \| u_x^{\ep}(t) \|_{H^k(\TT;TN)}^2, 
\quad 
3\leqslant k\leqslant m,
\label{equation:energyk}
\end{align}
%%%%
%%%% 
hold for all $t\in [0,T_{\ep}]$.
%%%
%%%
\par
%%%%
%%%% 
Throughout the proof of \eqref{equation:energy2} 
and \eqref{equation:energyk}, 
we simply write $u$, $J$, $g$ 
in place of 
$u^{\ep}$, $J_u$, $g_u$ respectively,  
$\|\cdot\|_{H^k}=\|\cdot\|_{H^k(\TT;TN)}$, 
$\|\cdot\|_{L^2}=\|\cdot\|_{L^2(\TT;TN)}$, 
$\|\cdot\|_{L^{\infty}}=\|\cdot\|_{L^{\infty}(\TT;TN)}$ 
for $k\in \mathbb{N}$,
and sometimes omit to write time variable $t$. 
%%%
%%%
\par
%%%
%%%
Let $2\leqslant k\leqslant m$. 
We consider the following quantity
\begin{align}
  \frac{1}{2}
  \frac{d}{dt}
  \| u_x \|_{H^k}^2
& =
  \frac{1}{2}
  \sum_{l=0}^k
  \frac{d}{dt}
  \| \nabla_x^lu_x \|_{L^2}^2
  =
  \sum_{l=0}^k
  \int_{\TT}
  g\left(
  \nabla_t\nabla_x^lu_x, \nabla_x^lu_x 
  \right)dx .
\label{equation:a0}
\end{align}
%%%
%%% 
Note that
$\nabla_tu_x=\nabla_xu_t$ 
and 
$\nabla_t\nabla_xu_x=\nabla_x\nabla_tu_x+R(u_t,u_x)u_x$ 
follows from the definition of the covariant derivative, 
where $R$ denotes the curvature tensor on $(N, J, g)$. 
Using these commutative relations inductively, 
we have for $l\geqslant 1$
\begin{equation}
\nabla_t\nabla_x^lu_x
=
\nabla_x^{l+1}u_t
+
\sum_{j=0}^{l-1}
\nabla_x^j
\left[ 
R(u_t, u_x)\nabla_x^{l-(j+1)}u_x 
\right].
\label{equation:a1}
\end{equation}
Substituting \eqref{equation:a1} and \eqref{equation:pde4} 
into \eqref{equation:a0} 
gives
\begin{align}
&\frac{1}{2} 
  \frac{d}{dt}
  \| u_x \|_{H^k}^2
\nonumber \\
&=
  \sum_{l=0}^k
  \int_{\TT}
  g\left( 
  \nabla_x^{l+1}u_t, \nabla_x^lu_x
  \right)dx
 +
  \sum_{l=1}^k\sum_{j=0}^{l-1}
  \int_{\TT}
  g\left(
  \nabla_x^j
  \left[ 
  R(u_t, u_x)\nabla_x^{l-(j+1)}u_x 
  \right], 
  \nabla_x^lu_x 
  \right)dx 
\nonumber
\\
&= -\ep 
   \sum_{l=0}^k
   \int_{\TT}
   g\left(
   \nabla_x^{l+4}u_x, \nabla_x^{l}u_x
   \right)dx
\nonumber \\
&\quad
  +a\sum_{l=0}^k
   \int_{\TT}
   g\left(
   \nabla_x^{l+3}u_x, \nabla_x^lu_x
   \right)dx
\nonumber \\
&\quad  +\sum_{l=0}^k
   \int_{\TT}
   g\left(
   \nabla_x^{l+1}J\nabla_xu_x, \nabla_x^{l}u_x
   \right)dx
\nonumber \\
&\quad
  +b\sum_{l=0}^k
   \int_{\TT}
   g\left(
   \nabla_x^{l+1}[g(u_x, u_x)u_x], \nabla_x^lu_x
    \right)dx
\nonumber \\
&\quad  -\ep 
   \sum_{l=1}^k
   \sum_{j=0}^{l-1}
   \int_{\TT}
   g\left(
   \nabla_x^j
   \left[ 
   R(\nabla_x^3u_x, u_x)\nabla_x^{l-(j+1)}u_x 
   \right], 
   \nabla_x^lu_x 
   \right)dx
\nonumber \\
&\quad  +a\sum_{l=1}^k 
    \sum_{j=0}^{l-1}
    \int_{\TT}
    g\left( 
    \nabla_x^j
    \left[ 
    R(\nabla_x^2u_x, u_x)\nabla_x^{l-(j+1)}u_x 
    \right], 
   \nabla_x^lu_x 
   \right)dx
\nonumber \\
&\quad  +\sum_{l=1}^k
   \sum_{j=0}^{l-1}
   \int_{\TT}
   g\left(
   \nabla_x^j
   \left[ 
   R(J\nabla_xu_x, u_x)\nabla_x^{l-(j+1)}u_x 
   \right], 
   \nabla_x^lu_x
   \right)dx 
\nonumber \\
&\quad  +b\sum_{l=1}^k
   \sum_{j=0}^{l-1}
   \int_{\TT}
  g\left(
  \nabla_x^j
  \left[ 
  g(u_x, u_x)R(u_x, u_x)\nabla_x^{l-(j+1)}u_x 
  \right], 
  \nabla_x^lu_x
  \right)dx.
\label{equation:a2}
\end{align}
Note that the last term of \eqref{equation:a2} 
equals to $0$ 
since $R(u_x, u_x)=0$.  
We next deduce
\begin{align}
  \int_{\TT}
  g\left(
   \nabla_x^{l+4}u_x, \nabla_x^lu_x 
  \right)dx
&= \int_{\TT}
  g\left(
   \nabla_x^{l+2}u_x, \nabla_x^{l+2}u_x 
  \right)dx
\nonumber \\
&
= \|\nabla_x^{l+2}u_x \|_{L^2}^2,
\label{equation:a3}
\\
  \int_{\TT}
  g\left(
  \nabla_x^{l+3}u_x, \nabla_x^{l}u_x 
   \right)dx
&= -\int_{\TT}
  g\left(
  \nabla_x^{l+2}u_x, \nabla_x^{l+1}u_x 
  \right)dx
\nonumber \\
&= -\frac{1}{2}\int_{\TT}
 \left[
g\left(
 \nabla_x^{l+1}u_x, \nabla_x^{l+1}u_x
 \right)
\right]_x
 dx
\nonumber \\
&
=0,
\label{equation:a4}
\intertext{by integrating by parts. 
In addition, 
$\nabla_xJ=J\nabla_x$ 
follows from the K\"ahler condition. 
Thus, by using this relation and the antisymmetricity of $J$, 
we have}
  \int_{\TT} 
  g\left(
  \nabla_x^{l+1}J\nabla_xu_x, \nabla_x^lu_x
  \right)dx
&=- \int_{\TT} 
  g\left(
  J\nabla_x^{l+1}u_x, \nabla_x^{l+1}u_x  
  \right)dx
=0.
\label{equation:a5}
\end{align}
Substituting  
\eqref{equation:a3}, 
\eqref{equation:a4} and 
\eqref{equation:a5} 
into 
\eqref{equation:a2} 
yields
\begin{equation}
\label{equation:a6}
\frac{1}{2}\frac{d}{dt}\| u_x \|_{H^k}^2
+\ep\sum_{l=0}^k\|\nabla_x^{l+2}u_x \|_{L^2}^2
=\operatorname{I}_k
+\operatorname{I}\hspace{-.1em}\operatorname{I}_k
+\operatorname{I}\hspace{-.1em}\operatorname{I}\hspace{-.1em}\operatorname{I}_k
+\operatorname{I}\hspace{-.1em}\operatorname{V}_k,
\end{equation}
where
\begin{align}
 \operatorname{I}_k
& =b\sum_{l=0}^k
  \int_{\TT}
  g\left(
  \nabla_x^{l+1}[g(u_x, u_x)u_x], \nabla_x^lu_x
  \right)dx,
\nonumber \\
 \operatorname{I}\hspace{-.1em}\operatorname{I}_k
& =a\sum_{l=1}^k 
   \sum_{j=0}^{l-1}
   \int_{\TT}
   g\left( 
   \nabla_x^j
  \left[ 
   R(\nabla_x^2u_x, u_x)\nabla_x^{l-(j+1)}u_x 
   \right], 
   \nabla_x^lu_x
   \right)dx,
\nonumber \\
 \operatorname{I}\hspace{-.1em}\operatorname{I}\hspace{-.1em}\operatorname{I}_k
& =\sum_{l=1}^k
  \sum_{j=0}^{l-1}
  \int_{\TT}
   g\left(
   \nabla_x^j
   \left[ 
   R(J\nabla_xu_x, u_x)\nabla_x^{l-(j+1)}u_x 
   \right], 
   \nabla_x^lu_x
   \right)dx,
\nonumber \\
 \operatorname{I}\hspace{-.1em}\operatorname{V}_k
& =-\ep 
   \sum_{l=1}^k
   \sum_{j=0}^{l-1}
   \int_{\TT}
   g\left(
   \nabla_x^j
  \left[ 
   R(\nabla_x^3u_x, u_x)\nabla_x^{l-(j+1)}u_x 
   \right], 
   \nabla_x^lu_x 
   \right)dx.
\nonumber
\end{align}
We show the desired bounds of 
$\operatorname{I}_k$, 
$\operatorname{I}\hspace{-.1em}\operatorname{I}_k$, 
$\operatorname{I}\hspace{-.1em}\operatorname{I}\hspace{-.1em}\operatorname{I}_k$,
 and 
$\operatorname{I}\hspace{-.1em}\operatorname{V}_k$ 
 below. 
\vspace{1em}
\\
\textbf{Case~1: $k=2.$}\\
We first consider 
$\operatorname{I}_2$. 
Using  
H\"older's inequality 
and the Sobolev embedding,  
we deduce
\begin{align}
 \operatorname{I}_2
=&b\int_{\TT}
  g\left( 
  \nabla_x[g(u_x, u_x)u_x], u_x 
  \right)dx
  \nonumber \\
&+b\int_{\TT}
  g\left( 
  \nabla_x^2[g(u_x, u_x)u_x], \nabla_xu_x 
  \right)dx
  \nonumber \\
&+b\int_{\TT}
   g\left( 
   \nabla_x^3[g(u_x, u_x)u_x], \nabla_x^2u_x 
   \right)dx 
\nonumber \\
\leqslant&
 C(b)\|u_x\|_{H^2}^4
  +
 b\int_{\TT}
   g\left( 
   \nabla_x^3[g(u_x, u_x)u_x], \nabla_x^2u_x 
   \right)dx. 
\nonumber 
\end{align}
%%
%%%
%
%
Furthermore a simple calculation gives
\begin{align}
\nabla_x^3[g(u_x, u_x)u_x]
=&
2g\left(\nabla_x^3u_x, u_x\right)u_x
+
g\left(u_x, u_x\right)\nabla_x^3u_x
\nonumber \\
&+
6g\left(\nabla_x^2u_x, \nabla_xu_x\right)u_x
+
6g\left(\nabla_x^2u_x, u_x\right)\nabla_xu_x
\nonumber \\
&+
6g\left(\nabla_xu_x, u_x\right)\nabla_x^2u_x
+
6g\left(\nabla_xu_x, \nabla_xu_x\right)\nabla_xu_x,
\nonumber
\end{align}
hence we deduce
\begin{align}
 \int_{\TT}
 g\left( 
   \nabla_x^3[g(u_x, u_x)u_x], \nabla_x^2u_x 
  \right)dx
=&
  2\int_{\TT}
   g\left( 
   g\left(\nabla_x^3u_x, u_x\right)u_x, 
   \nabla_x^2u_x 
   \right)dx
\nonumber \\
&
  +\int_{\TT}
  g\left( 
   g(u_x, u_x)\nabla_x^3u_x, 
   \nabla_x^2u_x 
   \right)dx
\nonumber \\
& 
  +12\int_{\TT}
  g\left( 
   g\left(\nabla_x^2u_x, u_x\right)\nabla_xu_x, 
   \nabla_x^2u_x 
   \right)dx
\nonumber \\
&
  +6\int_{\TT}
   g\left( 
   g\left(\nabla_xu_x, u_x\right)\nabla_x^2u_x, 
   \nabla_x^2u_x 
   \right)dx
\nonumber \\
&
  +6\int_{\TT}
  g\left( 
   g\left(\nabla_xu_x, \nabla_xu_x\right)\nabla_xu_x, 
   \nabla_x^2u_x 
   \right)dx.
\label{equation:b1}
\end{align} 
We see 
$\nabla_x^3u_x$ disappears 
from the first and the second term 
of the right hand side of \eqref{equation:b1} 
by a good symmetricity of $g$. 
In fact, 
after integrating by parts,  
we have  
%%%
%%%
\begin{align}
 2\int_{\TT}
  g\left( 
   g\left(\nabla_x^{3}u_x, u_x\right)u_x, 
   \nabla_x^2u_x 
   \right)dx
 & =-2\int_{\TT}
  g\left( 
   g\left(\nabla_x^2u_x, u_x\right)\nabla_xu_x, 
   \nabla_x^2u_x 
   \right)
   dx,
\label{equation:b2} \\
\int_{\TT}
  g\left( 
  g(u_x, u_x)\nabla_x^3u_x, 
   \nabla_x^2u_x 
   \right)
   dx 
 & = -\int_{\TT}
  g\left( 
   g\left(\nabla_xu_x, u_x\right)\nabla_x^2u_x, 
   \nabla_x^2u_x 
   \right)
   dx.
\label{equation:b3}
\intertext{
Substituting \eqref{equation:b2}, \eqref{equation:b3} 
into \eqref{equation:b1} 
and noting}
\int_{\TT}
g\left( 
g\left(\nabla_xu_x, \nabla_xu_x\right)\nabla_xu_x, 
\nabla_x^2u_x 
\right)
 dx
&=
\frac{1}{4}
\int_{\TT}
\left[
g\left(\nabla_xu_x,\nabla_xu_x\right)^2
\right]_x
dx
=0,
\nonumber 
\end{align}
we obtain  
%%%
%%%
\begin{align}
& \int_{\TT}
  g\left( 
   \nabla_x^3[g(u_x, u_x)u_x], \nabla_x^2u_x 
   \right)
   dx
\nonumber \\
&=
 10\int_{\TT}
  g\left( 
   g\left(\nabla_x^2u_x, u_x\right)\nabla_xu_x, 
   \nabla_x^2u_x 
   \right)
   dx
+5\int_{\TT}
 g\left( 
   g\left(\nabla_xu_x, u_x\right)\nabla_x^2u_x, 
   \nabla_x^2u_x 
   \right)
   dx,
\nonumber
\end{align}
%%%%
%%%%
which is bounded by $C\|u_x\|_{H^2}^4$.
Therefore we have
\begin{equation}
 \operatorname{I}_2
\leqslant 
C(b)\|u_x\|_{H^2}^4.
\label{equation:alpha}
\end{equation}
\par 
Next we consider 
$\operatorname{I}\hspace{-.1em}\operatorname{I}_2$. 
A simple computation gives  
\begin{align}
 \operatorname{I}\hspace{-.1em}\operatorname{I}_2
 =& a\int_{\TT}
    g\left( 
   \nabla_x
   \left[ R(\nabla_x^2u_x, u_x)u_x 
   \right], 
   \nabla_x^{2}u_x 
   \right)
   dx
\nonumber \\
&+a\int_{\TT}
  g\left( 
   R(\nabla_x^2u_x, u_x)\nabla_xu_x, 
   \nabla_x^2u_x 
   \right)
   dx
\nonumber \\
& +a\int_{\TT}
   g\left( 
   R(\nabla_x^2u_x, u_x)u_x, 
   \nabla_xu_x 
   \right)
   dx.
\label{equation:b4}
\end{align}
Moreover it follows from the definition of the covariant derivative 
of $R$ 
\begin{align} 
  \nabla_x
  \left[ R(\nabla_x^2u_x, u_x)u_x 
  \right]
=&
 (\nabla R)(u_x)(\nabla_x^2u_x, u_x)u_x 
+
 R(\nabla_x^3u_x, u_x)u_x
\nonumber \\
&+
  R(\nabla_x^2u_x, \nabla_xu_x)u_x
+
  R(\nabla_x^2u_x, u_x)\nabla_xu_x.
\label{equation:b5}
\end{align}
By noting that
\begin{equation}
g(R(X, Y)Z, W)=g(R(W, Z)Y, X)
\label{equation:b6}
\end{equation}
holds for any $X, Y, Z, W\in \Gamma(u^{-1}TN)$, 
and by substituting \eqref{equation:b5} 
into \eqref{equation:b4}, 
we deduce  
\begin{align}
\operatorname{I}\hspace{-.1em}\operatorname{I}_2
 =&a\int_{\TT}
   g\left( 
   (\nabla R)(u_x)(\nabla_x^2u_x, u_x)u_x, 
   \nabla_x^2u_x 
   \right)
   dx
\nonumber \\
& 
   +
   a\int_{\TT}
   g\left( 
   R(\nabla_x^3u_x, u_x)u_x, 
   \nabla_x^2u_x 
   \right)
   dx
\nonumber \\
& 
  +3a\int_{\TT}
   g\left( 
   R(\nabla_x^2u_x, \nabla_xu_x)u_x, 
   \nabla_x^2u_x 
   \right)
   dx
\nonumber \\
& 
  +a\int_{\TT}
   g\left( 
   R(\nabla_x^2u_x, u_x)u_x, 
   \nabla_xu_x 
   \right)
   dx
\nonumber \\
\leqslant& 
C(a, N)(\|u_x\|_{H^2}^4+\|u_x\|_{H^2}^5)
+  a\int_{\TT}
   g\left( 
   R(\nabla_x^3u_x, u_x)u_x, 
   \nabla_x^2u_x 
   \right)
   dx.
\label{equation:b7}
\end{align}
Here we used the fact that 
$R$ and $\nabla R$ are bounded operators 
since $N$ is compact. 
Furthermore, $\nabla_x^3u_x$ disappears from 
the second term of the right hand side of 
\eqref{equation:b7} 
because of the properties of $R$. 
In fact, we deduce from the integration by parts 
and \eqref{equation:b6}
\begin{align}
 a\int_{\TT}
  g\left( 
   R(\nabla_x^3u_x, u_x)u_x, 
   \nabla_x^2u_x 
   \right)
   dx
=& a\int_{\TT}
  g\left( 
   R(\nabla_x^2u_x, u_x)u_x, 
   \nabla_x^3u_x 
   \right)
   dx
\nonumber \\
=&
 \frac{a}{2}
 \int_{\TT}
  g\left( 
   R(\nabla_x^2u_x, u_x)u_x, 
   \nabla_x^3u_x 
   \right)
  dx
\nonumber \\
&
-
  \frac{a}{2}
   \int_{\TT}
   g\left( 
   R(\nabla_x^3u_x, u_x)u_x, 
   \nabla_x^2u_x 
   \right)
    dx
\nonumber \\
& 
   -\frac{a}{2}
   \int_{\TT}
   g\left( 
   R(\nabla_x^2u_x, \nabla_xu_x)u_x, 
   \nabla_x^2u_x 
   \right)
   dx
\nonumber \\
&
-
  \frac{a}{2}
   \int_{\TT}
   g\left( 
   R(\nabla_x^2u_x, u_x) \nabla_xu_x, 
   \nabla_x^2u_x 
   \right)
   dx
\nonumber \\
& 
  -\frac{a}{2}
   \int_{\TT}
   g\left( 
   (\nabla R)(u_x)(\nabla_x^2u_x, u_x)u_x, 
   \nabla_x^2u_x 
   \right)
   dx
\nonumber \\
=&-a\int_{\TT}
   g\left( 
   R(\nabla_x^2u_x, \nabla_xu_x)u_x, 
   \nabla_x^2u_x 
   \right)
   dx
\nonumber \\
& 
  -\frac{a}{2}
   \int_{\TT}
   g\left( 
   (\nabla R)(u_x)(\nabla_x^2u_x, u_x)u_x, 
   \nabla_x^2u_x 
   \right)
   dx, 
\label{equation:b8}
\end{align}
which is also bounded by 
$C(a, N)(\|u_x \|_{H^2}^4+\|u_x \|_{H^2}^5)$. 
Thus we get 
%
%%%
%%
%%
\begin{equation}
 \operatorname{I}\hspace{-.1em}\operatorname{I}_2
 \leqslant
C(a, N)(\|u_x \|_{H^2}^4+\|u_x \|_{H^2}^5).
\label{equation:beta}
\end{equation}
\par 
Next we compute 
$\operatorname{I}\hspace{-.1em}\operatorname{I}
\hspace{-.1em}\operatorname{I}_2$. 
This can be treated as the estimation of the composite function of 
lower order terms. Indeed, a simple computation gives
\begin{align}
 \operatorname{I}\hspace{-.1em}\operatorname{I}\hspace{-.1em}\operatorname{I}_2
 =&
  \int_{\TT}
  g\left( 
  \nabla_x
  \left[ R(J\nabla_xu_x, u_x)u_x \right], 
   \nabla_x^{2}u_x 
  \right)
  dx
\nonumber \\
&  +
  \int_{\TT}
  g\left( 
   R(J\nabla_xu_x, u_x)\nabla_xu_x, 
   \nabla_x^2u_x 
   \right)
   dx
\nonumber \\
&  +\int_{\TT}
   g\left( 
   R(J\nabla_xu_x, u_x)u_x, \nabla_xu_x 
   \right)
   dx
\nonumber \\
\leqslant&
 C(N)( \|u_x\|_{L^{\infty}}^3
       \|J\nabla_xu_x\|_{L^2}
       \|\nabla_x^2u_x\|_{L^2}
     +\|u_x\|_{L^{\infty}}^2
      \|J\nabla_x^2u_x\|_{L^2}
      \|\nabla_x^2u_x\|_{L^2}
\nonumber \\
& \qquad \quad 
    +\|u_x\|_{L^{\infty}} 
     \|\nabla_xu_x\|_{L^{\infty}}
     \|J\nabla_xu_x\|_{L^2} 
     \|\nabla_x^2u_x\|_{L^2}
\nonumber \\
& \qquad \quad 
    +\|u_x\|_{L^{\infty}}^2 
     \|J\nabla_xu_x\|_{L^2} 
     \|\nabla_xu_x\|_{L^2})
\nonumber \\
\leqslant& 
  C(N)(\|u_x\|_{H^2}^4 + \|u_x\|_{H^2}^5 ).
\label{equation:gamma}
\end{align}
\par 
Next we compute $\operatorname{I}\hspace{-.1em}\operatorname{V}_2$. 
Using \eqref{equation:b6} 
and the Cauchy inequality, 
we deduce for any $A>0$
\begin{align}
 \operatorname{I}\hspace{-.1em}\operatorname{V}_2
=& -\ep 
    \int_{\TT}
    g\left( 
     \nabla_x
     \left[ R(\nabla_x^3u_x, u_x)u_x \right], 
   \nabla_x^2u_x 
     \right)
     dx
\nonumber \\
&
 -\ep
  \int_{\TT}
   g\left(
   R(\nabla_x^3u_x, u_x)\nabla_xu_x , 
   \nabla_x^2u_x 
   \right)
   dx
\nonumber \\
&-\ep 
  \int_{\TT}
   g\left( 
   R(\nabla_x^3u_x, u_x)u_x, 
   \nabla_xu_x 
   \right)
   dx
\nonumber \\
 =&
  -\ep 
  \int_{\TT}
   g\left( 
   (\nabla R)(u_x)
   (\nabla_x^2u_x, u_x)u_x, 
   \nabla_x^3u_x 
   \right)
   dx
\nonumber \\
 & -\ep 
  \int_{\TT}
   g\left( 
   R(\nabla_x^2u_x, u_x)u_x, 
   \nabla_x^4u_x 
   \right)
   dx
\nonumber \\
 &
 -\ep 
  \int_{\TT}
   g\left( 
   R(\nabla_x^2u_x, u_x)\nabla_xu_x, 
   \nabla_x^3u_x 
   \right)
   dx
\nonumber \\
&-2\ep 
  \int_{\TT}
   g\left( 
   R(\nabla_x^2u_x, \nabla_xu_x)u_x, 
   \nabla_x^3u_x 
   \right)
   dx
\nonumber \\
 &
 -\ep 
  \int_{\TT}
   g\left( 
   R(\nabla_xu_x, u_x)u_x, 
   \nabla_x^3u_x 
   \right)
    dx.
\nonumber \\
\leqslant
& \ep A 
  \| \nabla_x^4u_x\|_{L^2}^2
 +5\ep A
  \| \nabla_x^3u_x\|_{L^2}^2
\nonumber \\
& +\frac{\ep}{4A}
  \Bigl\{ 
   \| (\nabla R)(u_x)(\nabla_x^2u_x, u_x)u_x  \|_{L^2}^2
   +
   \| R(\nabla_x^2u_x, u_x)u_x  \|_{L^2}^2
\nonumber \\
& \phantom{+\frac{\ep}{4A}}
  +\| R(\nabla_x^2u_x, u_x)\nabla_xu_x  \|_{L^2}^2
  +2\| R(\nabla_x^2u_x, \nabla_xu_x)u_x  \|_{L^2}^2
  +\| R(\nabla_xu_x, u_x)u_x  \|_{L^2}^2
  \Bigr\} 
\nonumber \\
 \leqslant
  & \ep A
   \| \nabla_x^4u_x\|_{L^2}^2
   +
   5\ep A
   \| \nabla_x^3u_x\|_{L^2}^2
 +
   \frac{C(N)}{4A}
 ( \|u_x\|_{H^2}^6+\|u_x\|_{H^2}^8 ).
\label{equation:sigma}
\end{align}
Combining the estimates 
\eqref{equation:alpha}, 
\eqref{equation:beta}, 
\eqref{equation:gamma} 
and
\eqref{equation:sigma}
yields 
\begin{align}
& \frac{1}{2}
  \frac{d}{dt}
  \| u_x \|_{H^2}^2
+
 (1-A)\ep 
      \|\nabla_x^4u_x \|_{L^2}^2
+
 (1-5A)\ep 
      \|\nabla_x^3u_x \|_{L^2}^2
+
 \ep 
  \|\nabla_x^2u_x \|_{L^2}^2
\nonumber \\
& \phantom{=}
 \leqslant 
 C(a, b, N, A)(
        \|u_x\|_{H^2}^4+\|u_x\|_{H^2}^5
       + \|u_x\|_{H^2}^6+\|u_x\|_{H^2}^8
              ).
\nonumber
\end{align}
Especially take $A>0$ as $A< 1/5$, 
then we obtain the desired inequality \eqref{equation:energy2}.
\vspace{1em}
\\
\textbf{Case~2: $3\leqslant k\leqslant m.$}\\
Let $3\leqslant k\leqslant m$. We also compute
$ 
\operatorname{I}_k
+\operatorname{I}\hspace{-.1em}\operatorname{I}_k
+\operatorname{I}\hspace{-.1em}\operatorname{I}\hspace{-.1em}\operatorname{I}_k
+\operatorname{I}\hspace{-.1em}\operatorname{V}_k 
$ 
in \eqref{equation:a6}. 
We can obtain the desired inequality \eqref{equation:energyk} 
in the similar argument as in the case $k=2$. 
\par
We first consider $\operatorname{I}_k$. 
A simple computation gives
\begin{align}
 \operatorname{I}_k
 =&2b\sum_{l=0}^k
  \int_{\TT}
  g\left( 
   g\left(\nabla_x^{l+1}u_x, u_x\right)u_x, 
   \nabla_x^lu_x 
   \right)
   dx
\nonumber \\
 &+b\sum_{l=0}^k
   \int_{\TT}
   g\left( 
   g(u_x, u_x)\nabla_x^{l+1}u_x, 
   \nabla_x^lu_x 
   \right)
   dx
\nonumber \\
&+2b\sum_{l=0}^k
   (l+1)\int_{\TT}
   g\left( 
   g\left(\nabla_x^lu_x, \nabla_xu_x\right)u_x, 
   \nabla_x^lu_x 
   \right)
    dx
\nonumber \\
&+2b\sum_{l=0}^k
   (l+1)\int_{\TT}
   g\left( 
   g\left(\nabla_x^lu_x, u_x\right)\nabla_xu_x, 
   \nabla_x^lu_x 
   \right)
   dx
\nonumber \\
&+2b\sum_{l=0}^k
   (l+1)\int_{\TT}
   g\left( 
   g\left(\nabla_xu_x, u_x\right)\nabla_x^lu_x, 
   \nabla_x^lu_x 
   \right)
   dx
\nonumber \\
&
+ 
  P_k,
\intertext{where}
 P_k
 =&
 b\sum_{l=0}^k
       \sum_{\begin{smallmatrix}
        \alpha+\beta+\gamma=l+1 \\
        \alpha, \beta, \gamma\geqslant 0\\
        \max\{\alpha, \beta, \gamma\}\leqslant l-1
       \end{smallmatrix}}
  \frac{(l+1)!}{\alpha!\beta!\gamma!}
  \int_{\TT}
  g\left( 
  g\left(
  \nabla_x^{\alpha}u_x, \nabla_x^{\beta}u_x
  \right)
  \nabla_x^{\gamma}u_x, 
  \nabla_x^lu_x 
   \right)
   dx.
\nonumber
\end{align}
It is easy to check 
$P_k$ 
is bounded by  
$C(b)\|u_x\|_{H^{k-1}}^2 
\|u_x\|_{H^k}^2$.
On the other hand,  
$\operatorname{I}_k-P_k$ can be treated in the same way as 
in the case $k=2$ by using the estimation like \eqref{equation:b2}  
and 
\eqref{equation:b3}.   
Indeed, by integrating by parts 
and by applying the good structure of $g$, we have
\begin{align}
 2\int_{\TT}
  g\left( 
   g\left(\nabla_x^{l+1}u_x, u_x\right)u_x, 
   \nabla_x^lu_x 
   \right)
   dx
& =
  -2\int_{\TT}
   g\left( 
   g\left(\nabla_x^lu_x, u_x\right)\nabla_xu_x, 
   \nabla_x^lu_x 
   \right)
   dx,
\nonumber
\\
 \int_{\TT}
  g\left( 
   g(u_x, u_x)\nabla_x^{l+1}u_x, 
   \nabla_x^lu_x 
   \right)
   dx
&=
-\int_{\TT}
 g\left( 
   g\left(\nabla_xu_x, u_x\right)\nabla_x^lu_x, 
   \nabla_x^lu_x 
   \right)
   dx.
\nonumber
\end{align}
Therefore we deduce
\begin{align}
 \operatorname{I}_k-P_k 
=&
b\sum_{l=0}^k
   (4l+2)\int_{\TT}
   g\left( 
   g\left(\nabla_x^lu_x, \nabla_xu_x\right)u_x, 
   \nabla_x^lu_x 
   \right)
   dx
\nonumber \\
& +b\sum_{l=0}^k
  (2l+1)\int_{\TT}
   g\left( 
   g\left(\nabla_xu_x, u_x\right)\nabla_x^lu_x, 
   \nabla_x^lu_x 
   \right)
   dx
\nonumber \\
 \leqslant& 
 C|b|\sum_{l=0}^k
 \|u_x\|_{L^{\infty}}
 \|\nabla_xu_x\|_{L^{\infty}}
 \|\nabla_x^lu_x\|_{L^2}^2
\nonumber \\ 
\leqslant& 
 C(b)\|u_x\|_{H^2}^2
     \|u_x\|_{H^k}^2.
\nonumber
\end{align}
%%%
%%% 
Consequently, we obtain the desired boundness 
$$\operatorname{I}_k 
\leqslant 
C(b, \|u_x\|_{H^{k-1}})\|u_x\|_{H^k}^2.$$
\par 
We next estimate  
$\operatorname{I}\hspace{-.1em}\operatorname{I}_k$. 
%%
%%
%%
%%%
%%%
A simple computation yields 
\begin{align}
\operatorname{I}\hspace{-.1em}\operatorname{I}_k
=&a\sum_{l=1}^k
   \int_{\TT}
   g\left( 
   R(\nabla_x^{l+1}u_x, u_x)u_x, 
  \nabla_x^lu_x 
   \right)
   dx 
\nonumber \\
&
+a\sum_{l=1}^k
  (l-1)\int_{\TT}
  g\left( 
  (\nabla R)(u_x)
  (\nabla_x^lu_x, u_x)u_x, 
  \nabla_x^lu_x 
   \right)
   dx 
\nonumber \\
&
 +a\sum_{l=1}^k
  (2l-1)\int_{\TT}
  g\left( 
  R(\nabla_x^lu_x, \nabla_xu_x)u_x, 
  \nabla_x^lu_x 
   \right)
   dx
\nonumber \\
& 
+
Q_k,    
\nonumber
\end{align}
where 
\begin{align}
Q_k
&=a\sum_{l=1}^k
 \sum_{j=0}^{l-1}
 \sum_{\begin{smallmatrix}
        p+q+r+s=j \\
        p, q, r, s\geqslant 0\\ 
        \max\{p, q+2, r, s+l-(j+1)\}
       \leqslant l-1 
       \end{smallmatrix}}
A_{p,q,r,s}^j
\nonumber \\
&\qquad \qquad \qquad \qquad
 \times
 \int_{\TT}
 g\left( 
 (\nabla_x^pR)
  (\nabla_x^{q+2}u_x, \nabla_x^{r}u_x)\nabla_x^{s+l-(j+1)}u_x, 
  \nabla_x^lu_x 
   \right)
   dx,
\nonumber \\
\nabla_x^pR
&=
\sum_{\alpha =1}^p
\sum_{\begin{smallmatrix}
        \alpha + \sum_{h=1}^{\alpha}p_h=p \\
        p_h\geqslant 0
       \end{smallmatrix}}
B_{p_1,\ldots,p_{\alpha}}^{\alpha}
(\nabla^{\alpha} R)(\nabla_x^{p_1}u_x, \ldots, \nabla_x^{p_\alpha}u_x )
\nonumber 
\end{align}
for some constant 
$A_{p,q,r,s}^j$, 
$B_{p_1,\ldots,p_{\alpha}}^{\alpha}$ 
if $p\in \mathbb{N}$, 
and $\nabla_x^0R=R$. 
\par
On the estimation of 
$\operatorname{I}\hspace{-.1em}\operatorname{I}_k-Q_k$, 
the property of Riemannian curvature tensor 
works well similarly to the estimate \eqref{equation:beta}. 
Indeed, after integrating by parts, we have
\begin{align}
&\int_{\TT}
   g\left( 
   R(\nabla_x^{l+1}u_x, u_x)u_x, 
  \nabla_x^lu_x 
   \right)
   dx 
\nonumber \\
&
=
-\frac{1}{2}
\int_{\TT}
   g\left( 
   (\nabla R)(u_x)(\nabla_x^lu_x, u_x)u_x, 
  \nabla_x^lu_x 
   \right)
   dx
\nonumber \\
& \phantom{=}
-\int_{\TT}
   g\left( 
   R(\nabla_x^lu_x, u_x)u_x, 
  \nabla_x^lu_x 
   \right)
   dx,
\nonumber  
\end{align}
thus we deduce
\begin{align}
\operatorname{I}\hspace{-.1em}\operatorname{I}_k-Q_k
=&
a\sum_{l=1}^k
  (l-3/2) \int_{\TT}
  g\left( 
  (\nabla R)(u_x)
  (\nabla_x^lu_x, u_x)u_x, 
  \nabla_x^lu_x 
   \right)
   dx 
\nonumber \\
&
+a\sum_{l=1}^k
   (2l-2)\int_{\TT}
   g\left( 
   R(\nabla_x^lu_x, \nabla_xu_x)u_x, 
  \nabla_x^lu_x 
   \right)
   dx 
\nonumber \\
\leqslant&
C(a, N)
(\|u_x \|_{L^{\infty}}^3+\|u_x \|_{L^{\infty}}
\|\nabla_xu_x \|_{L^{\infty}})
\|u_x \|_{H^k}^2.
\label{equation:c1}
\end{align}
\par 
On the other hand, 
on the estimation of $Q_k$,  
if the integers
$p, q, r, s \geqslant 0$ satisfy 
$p+q+r+s=j$ and 
$\max\{p, q+2, r, s+l-(j+1)\} \leqslant l-1$, 
we can easily check that 
there are at most two elements of the 
set 
$\{p, q+2, r, s+l-(j+1)\}$ 
which equals to $l-1$, 
and that the others are not greater than $l-2$. 
Thus we deduce
$$
Q_k
 \leqslant
C(a)\sum_{l=1}^k
\left( 
\sum_{p=0}^{l-1}
\| \nabla_x^pR \|_{L^{\infty}}
\right)
\|u_x\|_{H^{l-1}}^2
\|u_x\|_{H^l}^2.
$$
Here let us notice  from definition that 
there may appear 
$u_x, \ldots, \nabla_x^{p-1}u_x$
in $\nabla_x^pR$, 
but there does not appear  $\nabla_x^pu_x$ 
in $\nabla_x^pR$. 
Noting this, it is easy to check
$$
\sum_{p=0}^{l-1}
\| \nabla_x^pR \|_{L^{\infty}}
\leqslant 
C(N)
\sum_{p=0}^{l-1}
\sum_{r=0}^p
\| u_x\|_{H^p}^r
\leqslant 
C(N)
\sum_{r=0}^{l-1}
\| u_x\|_{H^{l-1}}^r.
$$
Therefore we have
\begin{equation}
Q_k
\leqslant 
C(a, N)
\left(
\sum_{r=2}^{k+1}
\| u_x\|_{H^{k-1}}^r
\right)
\| u_x\|_{H^k}^2.
\label{equation:c2}
\end{equation}
Thus \eqref{equation:c1}  and \eqref{equation:c2} 
imply the desired boundness
$$
\operatorname{I}\hspace{-.1em}\operatorname{I}_k
\leqslant 
C(a, N, \| u_x\|_{H^{k-1}})
\| u_x\|_{H^k}^2.
$$
%%%
%%%
%%%
\par 
The desired boundness of  
$\operatorname{I}\hspace{-.1em}\operatorname{I}\hspace{-.1em}\operatorname{I}_k$ 
and 
$\operatorname{I}\hspace{-.1em}\operatorname{V}_k$ 
also follows from the same argument as that of 
$\operatorname{I}\hspace{-.1em}\operatorname{I}\hspace{-.1em}\operatorname{I}_2$ 
and 
$\operatorname{I}\hspace{-.1em}\operatorname{V}_2$. 
Indeed, we can easily deduce 
\begin{align}
\operatorname{I}\hspace{-.1em}\operatorname{I}\hspace{-.1em}\operatorname{I}_k
& \leqslant
C(N, \| u_x\|_{H^{k-1}})   
\| u_x\|_{H^k}^2, 
\nonumber
\intertext{and there exists $C_1>0$ such that for any $A>0$}
\operatorname{I}\hspace{-.1em}\operatorname{V}_k
& \leqslant
C_1\ep 
A\sum_{l=0}^k
\| \nabla_x^{l+2}u_x\|_{L^2}^2
+ 
C(N, A, \|u_x\|_{H^{k-1}} )
\|u_x\|_{H^k}^2.
\nonumber
\end{align}
\par
Applying these estimation of 
$\operatorname{I}_k$, 
$\operatorname{I}\hspace{-.1em}\operatorname{I}_k$, 
$\operatorname{I}\hspace{-.1em}\operatorname{I}
\hspace{-.1em}\operatorname{I}_k$, 
$\operatorname{I}\hspace{-.1em}\operatorname{V}_k$ 
to the right hand side of  \eqref{equation:a6} 
leads to
\begin{align} 
  \frac{1}{2}
  \frac{d}{dt}
 \|u_x\|_{H^k}^2
+(1-C_1A)\ep
 \sum_{l=0}^{k}
 \|\nabla_x^{l+2}u_x\|_{L^2}^2
\leqslant
C(a, b, N, A,\|u_x\|_{H^{k-1}} )
\|u_x\|_{H^k}^2.
\label{equation:c3}
\end{align}
Thus, by taking $A<1/C_1$, 
we obtain the desired inequality \eqref{equation:energyk}.
\par
By using 
\eqref{equation:energy2} and \eqref{equation:energyk}, 
we now complete the proof of Lemma~\ref{lemma:energy}. 
Set   
$f(t)=\|u_x^{\ep}(t)\|_{H^2}^2+1$, 
then we have from \eqref{equation:energy2},
\begin{equation}
\label{equation:u1} 
\frac{df}{dt}\leqslant C(a, b, N)f^4, \quad f(0)=\|u_{0x}\|_{H^2}^2+1.
\end{equation}
It follows from \eqref{equation:u1} that  
there exists a positive constant
$T=T(a, b, N,\|u_{0x}\|_{H^2})>0$ 
and a positive constant  
$C_2=C_2(a, b, N,\|u_{0x}\|_{H^2})>0$ 
such that 
\begin{equation}
\label{equation:u2}
\|u_x^{\ep}(t)\|_{H^2}
\leqslant 
C_2
\end{equation}
holds for all $t\in [0, T]$.
Furthermore, since  \eqref{equation:energyk} holds 
for $k=3$, 
\eqref{equation:u2} and the Gronwall inequality implies
$$
\|u_x^{\ep}(t)\|_{H^3}^2
\leqslant 
\|u_{0x}\|_{H^3}^2
\exp 
( C(a, b, N, C_2 )T),
$$
which implies the existence of a constant 
$C_3=C_3(a, b, N,\|u_{0x}\|_{H^3},T)>0$ 
such that 
$$
\|u_x^{\ep}(t)\|_{H^3}
\leqslant 
C_3
$$ 
holds for all $t\in [0,T]$. 
It is now clear that we can show, 
by using \eqref{equation:energyk} inductively 
for each $3\leqslant k\leqslant m$, 
the existence of a constant 
$C_m=C_m(a, b, N,\|u_{0x}\|_{H^m},T)>0$ 
such that 
$$
\sup_{t\in [0,T]}\|u_x^{\ep}(t)\|_{H^m}
\leqslant 
C_m.
$$
\par 
It is easy to find that the solution $u^{\ep}$ 
to \eqref{equation:pde4}-\eqref{equation:data4} 
with $\ep \in (0,1)$ 
must exists on the interval $[0, T]$. 
Otherwise we extend the time interval of existence to cover  
$[0,T]$, 
that is, we have  
$T_{\ep}\geqslant T$. 
Thus the lemma has been proved. 
\end{proof}
\begin{remark}
\label{remark:remark3}
$\{u_x^{\ep}\}_{\ep \in (0,1)}$ 
gains the regularity in the following sense.
That is, by applying \eqref{equation:c3} with $k=m$, 
and by integrating on $[0, T]$,  
we obtain
$$
2(1-C_1A)\ep
\sum_{l=0}^{m}
\|\nabla_x^{l+2}u_x^{\ep}\|_{L^2([0,T]\times \TT)}^2
\leqslant 
C(a, b, N, A, \|u_{0x}\|_{H^m})T
+
\|u_{0x}\|_{H^m}^2.
$$
This implies
$
\{\ep^{1/2}
\nabla_x^mu_x^{\ep}\}_{\ep \in (0,1)}
$
is bounded in 
$L^2(0,T;H^2(\TT;TN))$.
This property will be used 
in the compactness argument 
in the next section.
\end{remark}
%%
%%
%%%
%%%Section5. Proof of Theorem \ref{theorem:eo}
%%%
\section{Proof of Theorem \ref{theorem:eo}}
\label{section:proof1}
We are now in a position to complete the proof of 
Theorem~\ref{theorem:eo}.
%%
%%Proof of Theorem~\ref{theorem:eo}
%%
\begin{proof}[Proof of Theorem~\ref{theorem:eo}]
At first assume that $N$ is compact.
%%
%%Proof of existence
%%
\begin{proof}[Proof of existence]
Suppose that $u_0\in H^{m+1}(\TT;N)$  
with the integer $m\geqslant 2$ is given. 
By
Proposition~\ref{proposition:pr} 
there exists a sequence  
$\{u^{\ep}\}_{\ep \in (0,1)}$ 
solving \eqref{equation:pde4}-\eqref{equation:data4} 
for each $\ep>0$. 
Moreover, Lemma~\ref{lemma:energy} implies 
there exists  
$T=T(a, b, N,\|u_{0x}\|_{H^2(\TT;TN)})>0$  
which is independent of $\ep \in (0,1)$  
such that 
$\{u^{\ep}_x\}_{\ep \in (0,1)}$ 
is bounded in 
$L^{\infty}(0,T;H^m(\TT;TN))$. 
Thus, since $\TT$ is compact, we have 
$\{v^{\ep}\}_{\ep \in (0,1)}$
is bounded in 
$L^{\infty}(0,T;H^{m+1}(\TT;\RR^d))$, 
where $v^{\ep}=w\circ u^{\ep}$.
On the other hand, as stated in 
Remark~\ref{remark:remark3} 
in the  previous section,
$
\{\ep^{1/2}
\nabla_x^mu_x^{\ep}\}_{\ep \in (0,1)}
$
is bounded in 
$L^2(0,T;H^2(\TT;TN))$. 
Noting this, we see 
$\{u^{\ep}_t\}_{\ep \in (0,1)}$ 
is bounded in 
$L^2(0,T;H^{m-2}(\TT;TN))$,  
which implies  
$\{v^{\ep}\}_{\ep \in (0,1)} $
is bounded in 
$C^{0,1/2}([0,T];H^{m-2}(\TT;\RR^d))$. 
Consequently, by interpolating the spaces
$L^{\infty}(0,T;H^{m+1}(\TT;\RR^d))$ 
and 
$C^{0,1/2}([0,T];H^{m-2}(\TT;\RR^d))$, 
we obtain that 
$\{v^{\ep}\}_{\ep \in (0,1)} $ 
is bounded in the class
$C^{0,\alpha}([0,T];H^{m+1-6\alpha}(\TT;\RR^d))$
for any $0< \alpha \leqslant 1/2$. 
Hence  we see from Rellich's theorem and the Ascoli-Arzela theorem 
that there exists a subsequence  
$\{v^{\ep(j)}\}_{j=1}^{\infty} $ 
and 
$$
v\in L^{\infty}(0,T;H^{m+1}(\TT;\RR^d))
\cap
C([0, T]; H^m(\TT; \RR^d))
$$
such that
\begin{alignat}{4}
& v^{\ep(j)}
\stackrel{w^{\star}}{\longrightarrow}
v
\quad
&
\text{in}
\quad
&
L^{\infty}(0,T;H^{m+1}(\TT;\RR^d))
\quad
&
\text{as}
\quad
&
j\to \infty,
\label{equation:converge1}
\\
& v^{\ep(j)}
\longrightarrow
v
\quad
&
\text{in}
\quad
&
C([0,T];H^m(\TT;\RR^d))
\quad
&
\text{as}
\quad
&
j\to \infty.
\label{equation:converge2}
\end{alignat}
%%%
%%%
%%% 
In particular, 
we see from \eqref{equation:converge2} that  
$v\in C([0,T]\times \TT;w(N))$. 
Furthermore it is easy to check that $v$ is a solution of 
\eqref{equation:pde3}-\eqref{equation:data3} 
with the initial data $w\circ u_0$.  
Thus Lemma~\ref{lemma:equiv} implies that  
$u=w^{-1}\circ v\in C([0,T]\times \TT;N)$ 
satisfies
$$
u\in
L^{\infty}(0,T;H^{m+1}(\TT;N))
\cap
C([0,T];H^m(\TT;N))
$$
and solves \eqref{equation:pde}-\eqref{equation:data} 
with the initial data $u_0$, 
which completes the proof of the existence. 
\end{proof}
%%
%%Uniqueness
%%%
\begin{proof}[Proof of uniqueness]
Let $u,v\in L^{\infty}(0,T;H^{m+1}(\TT;N)) 
\cap C([0,T];H^{m}(\TT;N))$ 
be solutions of \eqref{equation:pde}-\eqref{equation:data} 
such that $u(0,x)=v(0,x)$. 
Identify $u,v$ with $w\circ u, w\circ v$. 
Then $u$ and $v$ satisfy 
\eqref{equation:pde3}-\eqref{equation:data3} 
with $u(0,x)=v(0,x)$, 
and $z=u-v$ makes sense as $\RR^d$-valued function.  
Taking the difference between two equations, 
we have 
$$
z_t-az_{xxx}=f(u, u_x, u_{xx})-f(v, v_x, v_{xx}), 
$$
where 
\begin{align}
f(u, u_x, u_{xx})
=&
a\left\{
\left[A(u)(u_x,u_x)\right]_x
+
A(u)(u_{xx}+A(u)(u_x,u_x), u_x)
\right\}
\nonumber \\
&+
\tilde{J}_u(u_{xx}+A(u)(u_x,u_x))
+
b\left|u_x\right|^2u_x.
\nonumber
\end{align}
To prove that $z=0$, we show that there exists 
a constant $C>0$  
depending only on  
$a, b, N$, and the quantities $\|u_x\|_{L^{\infty}(0,T;H^2(\TT;\RR^d))}$, 
$\|v_x\|_{L^{\infty}(0,T;H^2(\TT;\RR^d))}$
such that
$$
\frac{d}{dt}
\|z(t)\|_{H^1(\TT;\RR^d)}^2
\leqslant
C\|z(t)\|_{H^1(\TT;\RR^d)}^2.
$$
We write $C$ 
without commenting 
the dependence of the constant,  
simply write 
$\|\cdot\|_{H^1}=\|\cdot\|_{H^1(\TT;\RR^d)}$, 
$\|\cdot\|_{L^2}^2=\|\cdot\|_{L^2(\TT;\RR^d)}^2
=\left\langle \cdot, \cdot \right\rangle$
and omit to write time variable $t$ below.
\par
At first, since the mean value theorem shows that 
$$
f(u,u_x,u_{xx})-f(v,v_x,v_{xx})
=
O(|z|+|z_x|+|z_{xx}|),
$$
we can easily check 
$$
\frac{1}{2}
\frac{d}{dt}
\|z\|_{L^2}^2
=
\left\langle z, z_t \right\rangle
\leqslant 
C\|z\|_{H^1}^2
$$
by using the integration by parts.
Thus  
we concentrate on the estimate of
$$
\frac{1}{2}
\frac{d}{dt}
\|z_x\|_{L^2}^2
=
\left\langle 
z_x, z_{xt} 
\right\rangle
=
-\left\langle 
z_{xx}, z_t 
\right\rangle
=
-\left\langle 
z_{xx}, f_a+f_J+f_b 
\right\rangle,
$$
where
\begin{align}
f_a
=&
a\Bigl\{u_{xxx}+\left[A(u)(u_x,u_x)\right]_x
        +A(u)(u_{xx}+A(u)(u_x,u_x), u_x)
\nonumber \\      
       &-v_{xxx}-\left[A(v)(v_x,v_x)\right]_x
        -A(v)(v_{xx}+A(v)(v_x,v_x), v_x)
 \Bigr\},
\nonumber \\
f_J
=&
\tilde{J}_u(u_{xx}+A(u)(u_x,u_x))
      -\tilde{J}_v(v_{xx}+A(v)(v_x,v_x)),
\nonumber \\
f_b
=&
b\left(\left|u_x\right|^2u_x-\left|v_x\right|^2v_x
\right).
\nonumber
\end{align}
For any $y\in w(N)$, let 
$p(y)=d\pi_y:\RR^d \to T_yw(N)$ 
be the orthogonal projection onto 
the tangent space of $w(N)$ at $y$, 
and define $n(y)=I_d-p(y)$, 
where $I_d$ is the identity on $\RR^d$. 
Note that $p(y)$ and $n(y)$ 
behaves as symmetric matrix on $\RR^d$ 
respectively.
\par
On the estimation of 
$\left\langle 
z_{xx}, f_J 
\right\rangle$,
let us notice at first 
$$
\tilde{J}_v(v_{xx}+A(v)(v_x,v_x))
=
\tilde{J}_vp(v)v_{xx}.
$$
Since $\tilde{J}_vp(v):\RR^d\to \RR^d$ is antisymmetric,  
we obtain the desired boundness.
Indeed, 
$$
\left\langle 
z_{xx}, f_J 
\right\rangle
=
\left\langle 
z_{xx}, (\tilde{J}_up(u)-\tilde{J}_vp(v))u_{xx}  
\right\rangle
+
\left\langle 
z_{xx}, \tilde{J}_vp(v)z_{xx}  
\right\rangle,
$$ 
where the second term of the right hand side vanishes and 
the first term of the right hand side
is bounded by $C\|z\|_{H^1}^2$ by using the integration by parts 
and the mean value theorem. 
\par 
The desired boundness of
$\left\langle 
z_{xx}, f_b 
\right\rangle$
follows
from the facts that 
$\|v_x\|^2I_d:\RR^d\to \RR^d$
and
$(v_x, \cdot)v_x:\RR^d\to \RR^d$ 
are symmetric respectively
and $v_x$ is in $L^{\infty}(0,T;H^2(\TT;\RR^d))$.
It is not so difficult, hence we omit the detail. 
\par
Thus it suffices to consider 
$\left\langle 
z_{xx}, f_a 
\right\rangle$.
From the definition 
of the covariant derivative along the curve 
and the relations
$p(u)^2=p(u)$, 
$p(u)=I_d-n(u)$, 
we deduce
\begin{align}
&u_{xxx}
+
\left[A(u)(u_x,u_x)\right]_x
+
A(u)(u_{xx}+A(u)(u_x,u_x), u_x)
\nonumber \\
&=
p(u)\left[p(u)u_{xx}\right]_x
\nonumber \\
&=
p(u)u_{xxx}+
p(u)\left[p(u)\right]_xu_{xx}
\nonumber \\
&=
u_{xxx}-n(u)u_{xxx}
+p(u)\left[p(u)\right]_xu_{xx}.
\nonumber
\end{align}
Roughly speaking,  
$n(u)$ gains the regularity of order $1$
since $u$ is $w(N)$-valued. 
In fact, as is shown below,
$-n(u)u_{xxx}
+p(u)\left[p(u)\right]_xu_{xx}$
essentially behaves as lower order term 
and  does not cause 
any bad effects on the $H^1$-energy estimate.
We first decompose by  
$$
\left\langle 
z_{xx}, f_a 
\right\rangle
=
a(A_0+A_1+A_2+A_3),
$$
where
\begin{align}
A_0
&=
\left\langle 
z_{xx},z_{xxx} 
\right\rangle
=0, 
\nonumber \\
A_1
&=
-
\left\langle 
z_{xx}, (n(u)-n(v))u_{xxx} 
\right\rangle
+
\left\langle 
z_{xx}, p(v)\left[p(u)-p(v)\right]_xu_{xx} 
\right\rangle, 
\nonumber \\
A_2
&=
-
\left\langle 
z_{xx}, n(v)z_{xxx} 
\right\rangle
+
\left\langle 
z_{xx}, p(v)\left[p(v)\right]_xz_{xx} 
\right\rangle, 
\nonumber \\
A_3
&=
\left\langle 
z_{xx}, (p(u)-p(v))\left[p(u)\right]_xu_{xx} 
\right\rangle.
\nonumber 
\end{align}
Obviously $A_3$ is bounded by $C\|z\|_{H^1}^2$ 
by using the integration by parts and the mean value theorem. 
In addition, since $p(v)$ is symmetric and $p(v)^2=p(v)$ on $\RR^d$, 
we deduce 
\begin{align}
A_2
&=
-
\left\langle 
z_{xx}, n(v)z_{xxx} 
\right\rangle
+
\left\langle 
p(v)z_{xx},\left[p(v)\right]_xz_{xx} 
\right\rangle
\nonumber \\
&=
-
\left\langle 
z_{xx}, n(v)z_{xxx} 
\right\rangle
-
\left\langle 
z_{xx},p(v)z_{xxx} 
\right\rangle 
\nonumber \\
&=
-
\left\langle 
z_{xx}, z_{xxx} 
\right\rangle 
\nonumber \\
&=0.
\nonumber
\end{align}
We need to estimate $A_1$ carefully. 
At first, assume that there exists 
real-valued functions $G^j$ 
defined on a neighbourhood of $w(N)$ in $\RR^d$ 
satisfying $\operatorname{grad}G^j\neq 0$ 
for each $j=n+1, \ldots, d$ 
such that 
$$
w(N)
=
\left\{
\ v
\ |
\
G^{n+1}(v)= \cdots= G^d(v)=0
\
\right\}.
$$
In this case $n\in \mathbb{N}$ 
is the real-dimension of $w(N)$ as the 
compact submanifold of $\RR^d$. 
Note that there exists a smooth orthonormal frame  
$\{\nu^{n+1}, \cdots, \nu^d\}$ 
for the normal bundle $(Tw(N))^{\perp}$ 
globally on $w(N)$. 
In this setting we start the estimation of $A_1$. 
It follows from the properties of 
$p(v), n(v), p(u)$ and $n(u)$ that 
\begin{align}
A_1
&=
-
\left\langle 
z_{xx}, (n(u)-n(v))u_{xxx} 
\right\rangle
-
\left\langle 
z_{xx}, p(v)\left[n(u)-n(v)\right]_xu_{xx} 
\right\rangle 
\nonumber \\
&=
-
\left\langle 
z_{xx}, n(v)(n(u)-n(v))u_{xxx} 
\right\rangle
-
\left\langle 
z_{xx}, p(v)\left[(n(u)-n(v))u_{xx}\right]_x
\right\rangle 
\nonumber \\
&=
-
\left\langle 
n(v)z_{xx}, (n(u)-n(v))u_{xxx} 
\right\rangle
-
\left\langle 
p(v)z_{xx}, \left[(n(u)-n(v))u_{xx}\right]_x
\right\rangle. 
\label{equation:unique0} 
\end{align}
On the first term of \eqref{equation:unique0}, 
it is important to note 
\begin{equation}
n(v)z_{xx}
=
\sum_{j=n+1}^d
\left(
z_{xx}, \nu^j(v)
\right)
\nu^j(v)
=
O(|z_x|)
\label{equation:unique1}
\end{equation}
holds since $v$ is $w(N)$-valued. 
Indeed, 
by taking the derivative of 
$(v_x, \nu^j(v))=0$ 
with respect to $x$, 
we have $(v_{xx}, \nu^j(v))=-(v_x, [\nu^j(v)]_x)$ 
and thus a simple computation implies 
\begin{equation}
\bigl(z_{xx},\nu^j(v)\bigr)
=
-\bigl(z_x,\left[\nu^j(v)\right]_x
 \bigr)
-\bigl(u_x,\left[\nu^j(u)-\nu^j(v)\right]_x
 \bigr)
-\bigl(u_{xx},\nu^j(u)-\nu^j(v)
 \bigr),
\label{equation:unique2}
\end{equation}
which is $O(|z_x|)$.
Hence, by noting \eqref{equation:unique1}, we have 
$$
-
\left\langle 
n(v)z_{xx}, (n(u)-n(v))u_{xxx} 
\right\rangle
\leqslant 
C\|z_x\|_{L^2}\|z\|_{L^{\infty}}\|u_{xxx}\|_{L^2} 
\leqslant 
C\|z\|_{H^1}^2.
$$
%%%
%%%
On the second term of \eqref{equation:unique0}, 
we deduce 
\begin{align}
-
\left\langle 
p(v)z_{xx}, \left[(n(u)-n(v))u_{xx}\right]_x
\right\rangle
=&
-
\left\langle 
(p(v)-p(u))z_{xx}, \left[(n(u)-n(v))u_{xx}\right]_x
\right\rangle
\nonumber \\
&+
\left\langle 
n(u)z_{xx}, \left[(n(u)-n(v))u_{xx}\right]_x
\right\rangle
\nonumber \\
&
-
\left\langle 
z_{xx}, \left[(n(u)-n(v))u_{xx}\right]_x
\right\rangle.
\label{equation:unique3}
\end{align}
The first term of \eqref{equation:unique3} 
is obviously bounded by $C\|z\|_{H^1}^2$. 
The second term of \eqref{equation:unique3} 
is also bounded by $C\|z\|_{H^1}^2$ since 
$n(u)z_{xx}=O(|z_x|)$. 
We consider the third term of \eqref{equation:unique3}. 
We have  
\begin{align}
&(n(u)-n(v))u_{xx}
\nonumber \\
&=
\sum_{j=n+1}^d
(u_{xx}, \nu^j(u))\nu^j(u)
-
\sum_{j=n+1}^d
(u_{xx}, \nu^j(v))\nu^j(v)
\nonumber \\
&=
\sum_{j=n+1}^d
(u_{xx}, \nu^j(u))(\nu^j(u)-\nu^j(v))
+
\sum_{j=n+1}^d
(u_{xx}, \nu^j(u)-\nu^j(v))\nu^j(v) 
\nonumber \\
&=
-\sum_{j=n+1}^d
(u_{x}, \left[\nu^j(u)\right]_x)(\nu^j(u)-\nu^j(v))
+
\sum_{j=n+1}^d
(u_{xx}, \nu^j(u)-\nu^j(v))\nu^j(v). 
\nonumber 
\end{align}
Thus it follows that
\begin{align}
&\left[(n(u)-n(v))u_{xx}\right]_x
\nonumber \\
&=
-\sum_{j=n+1}^d
(u_{x}, \left[\nu^j(u)\right]_x)\left[\nu^j(u)-\nu^j(v)\right]_x
+
\sum_{j=n+1}^d
(u_{xx}, \left[\nu^j(u)-\nu^j(v)\right]_x)\nu^j(v)
\nonumber \\
&\quad +\sum_{j=n+1}^d
(u_{xxx}, \nu^j(u)-\nu^j(v))\nu^j(v)+
O(|z|). 
\nonumber
\end{align}
Moreover, noting that  
$(z_{xx}, \nu^j(v))=O(|z_x|)$ 
follows from \eqref{equation:unique2}, we get 
\begin{align}
\sum_{j=n+1}^d
\left\langle 
z_{xx}, (u_{xx}, \left[\nu^j(u)-\nu^j(v)\right]_x)\nu^j(v)
\right\rangle
&\leqslant 
C\|z\|_{H^1}^2,
\nonumber \\
\sum_{j=n+1}^d
\left\langle 
z_{xx}, (u_{xxx}, \nu^j(u)-\nu^j(v))\nu^j(v)
\right\rangle
&\leqslant 
C\|z\|_{H^1}^2.
\nonumber
\end{align}
Thus we have only to estimate the following quantity
\begin{equation}
\sum_{j=n+1}^d
\left\langle 
z_{xx}, (u_{x}, \left[\nu^j(u)\right]_x)\left[\nu^j(u)-\nu^j(v)\right]_x
\right\rangle.
\label{equation:unique4}
\end{equation}
Here we write 
$$
\left[\nu^j(u)\right]_x
=
D^j(u) u_x, 
\quad 
j=n+1, \ldots, d,
$$
%%%%%
where 
$D^j(u)=\operatorname{grad}\nu^j(u)$ is a 
$\RR^d\times \RR^d$-valued function.
%%%
%%%
Using this notation, we have 
\begin{align}
\eqref{equation:unique4}
=&
\sum_{j=n+1}^d
\left\langle 
z_{xx}, (u_{x}, \left[\nu^j(u)\right]_x)(D^j(u)-D^j(v))u_x
\right\rangle
\nonumber
\\
&+
\sum_{j=n+1}^d
\left\langle 
z_{xx}, (u_{x}, \left[\nu^j(u)\right]_x)D^j(v)z_x
\right\rangle.
\label{equation:unique5}
\end{align}
The first term of \eqref{equation:unique5} 
is obviously bounded by $C\|z\|_{H^1}^2$.
On the second term of \eqref{equation:unique5}, 
note first that the following relation
\begin{equation}
\nu^j(v)
=
\frac{\operatorname{grad}G^j(v)}
     {\lvert\operatorname{grad}G^j(v) \rvert}
=
\operatorname{grad}
\left(
\frac{G^j(v)}
{\lvert\operatorname{grad}G^j(v) \rvert}
\right)
\label{equation:unique6}
\end{equation}
holds at $v\in w(N)$. Thus it follows that 
$$
D^j(v)
=
\left(
\frac{\partial^2}{\partial v^{\alpha}\partial v^{\beta}}
\left(
\frac{G^j(v)}
{\lvert\operatorname{grad}G^j(v) \rvert}
\right)
\right)_{1\leqslant \alpha, \beta \leqslant d},
$$
which is a symmetric matrix valued. 
Then we deduce 
\begin{align}
\sum_{j=n+1}^d
\left\langle 
z_{xx}, (u_{x}, \left[\nu^j(u)\right]_x)D^j(v)z_x
\right\rangle
=-\frac{1}{2}
\sum_{j=n+1}^d
\left\langle 
z_x, \left[(u_{x}, \left[\nu^j(u)\right]_x)D^j(v)\right]_xz_x
\right\rangle
\nonumber 
\end{align}
which is bounded by $C\|z\|_{H^1}^2$.
Consequently we obtain the desired boundness of  
$A_1$.
%%%
\par
%%%
In the general case, 
there may not exists any 
global orthonormal frame  
for the normal bundle $(Tw(N))^{\perp}$ on $w(N)$. 
However, we can assume without loss of generality 
that 
$$
w(N)
=
\bigcup_{I=1}^L
\Omega_I
=
\bigcup_{I=1}^L
\left\{
\
v
\
|
\
G_I^{n+1}(v)
=
\cdots
=
G_I^{d}(v)
=
0
\
\right\}
$$
%%
%%%
for some $L\in \mathbb{N}$ and real-valued
functions
$G_I^{n+1},
\ldots,
G_I^{d}$ 
defined in the neighbourhood of 
$\Omega_I$ in $\RR^d$ with 
$\operatorname{grad}G_I^j\neq 0$
for each $j$, $1\leqslant I\leqslant L$. 
Let $\{\lambda^I\}_{I=1}^L$ be a partition of unity 
associated to 
$\{\Omega_I\}_{I=1}^L$. 
Then on each $\Omega_I$, 
there exists a smooth 
orthonormal frame  
for the normal bundle 
satisfiying the relation like \eqref{equation:unique6}.
Furthermore, we can proceed almost the same argument 
as above by noting
$n(u)=n(u)\sum_{I=1}^L\lambda^I(u)$ and  
$[n(u)]_x=[n(u)]_x\sum_{I=1}^L\lambda^I(u)$.
It is not difficult, thus we omit the detail. 
\par
Consequently, we obtain the desired inequality 
$$
\frac{d}{dt}
\|z(t)\|_{H^1}^2
\leqslant
C
\|z(t)\|_{H^1}^2.
$$
Thus, since $z(0)=0$, Gronwall's inequality implies 
$z=0$.  
This completes the proof of the uniqueness. 
\end{proof}
%%%
%%%Proof of \ $\nabla_{x}^mu_x \in C([0,T];L^2(\TT;TN))$
%%%
\begin{proof}[Proof of the continuity in time of \ $\nabla_x^mu_x$
in $L^2(\TT;TN)$]
%%%%
%%%%
So far in our proof, 
we have proved the existence of a unique solution 
$u\in L^{\infty}(0,T;H^{m+1}(\TT;N)) 
\cap 
C([0,T];H^m(\TT;N)$. 
Let $v=w\circ u$. 
To obtain that
$u\in C([0,T];H^{m+1}(\TT;N)$, 
we show  
$v_x\in C([0,T];H^m(\TT;\RR^d))$.
Note that it follows from the definition of the covariant derivative
that 
\begin{equation}
dw_u(\nabla_x^mu_x)
=
\partial_x^{m+1}v
+
\sum_{l=2}^{m+1}
\sum_{\begin{smallmatrix}
      \alpha_1+\cdots +\alpha_l=m+1 \\
      \alpha_i\geqslant 1\\
      \end{smallmatrix}}
B_{(\alpha_1, \cdots, \alpha_l)}
(v)
(\partial_x^{\alpha_1}v_x, 
\cdots, 
\partial_x^{\alpha_l}v_x).
\label{equation:es0}
\end{equation}
Here
$
B_{(\alpha_1, \cdots, \alpha_l)}(\cdot)
$
are multi-linear vector-valued functions  
on $\RR^d$, 
and it is easy to check that the second term of 
the right hand side of \eqref{equation:es0} is in 
$C([0, T];L^2(\TT;\RR^d))$.
Thus it suffices to show that
$dw_u(\nabla_x^mu_x)$ 
belongs to  
$C([0,T];L^2(\TT;\RR^d))$. 
%%%
%%%
\par 
First of all, 
we can derive  
from the energy estimate \eqref{equation:energyk} 
and the isometricity of $w$ 
%%%
%%%
%%%
\begin{align}
\frac{d}{dt}
\|dw_{u^{\ep}}(\nabla_x^mu_x^{\ep})(t)\|_{L^2(\TT;\RR^d)}^2
&=
\frac{d}{dt}
\|\nabla_x^mu_x^{\ep}(t)\|_{L^2(\TT;TN)}^2
\leqslant 
C
\nonumber 
\intertext{for some $C>0$ which is independent of $\ep\in (0,1)$. 
Therefore it follows that} 
\|dw_{u^{\ep}}(\nabla_x^mu_x^{\ep})(t)\|_{L^2(\TT;\RR^d)}^2
&\leqslant
\|dw_u(\nabla_x^mu_{x})(0)\|_{L^2(\TT;\RR^d)}^2
+Ct.
\nonumber \\
\intertext{Letting $\ep \downarrow 0$, we have
$dw_u(\nabla_x^mu_x)(t)\in L^2(\TT;\RR^d)$ 
makes sense for all $t\in [0, T]$, and } 
\|dw_u(\nabla_x^mu_x)(t)\|_{L^2(\TT;\RR^d)}^2
&\leqslant
\|dw_u(\nabla_x^mu_{x})(0)\|_{L^2(\TT;\RR^d)}^2
+Ct,
\nonumber
\intertext{which leads to} 
\limsup_{t\to 0}
\|dw_u(\nabla_x^mu_x)(t)\|_{L^2(\TT;\RR^d)}^2
&\leqslant
\|dw_u(\nabla_x^mu_{x})(0)\|_{L^2(\TT;\RR^d)}^2.
\label{equation:es1}
\intertext{Moreover, 
since $v\in L^{\infty}(0,T;H^{m+1}(\TT;\RR^d)) 
\cap 
C([0,T];H^m(\TT;\RR^d))$, 
we see 
$dw_u(\nabla_x^mu_x)(t)$ 
is weakly continuous in $L^2(\TT;\RR^d)$. 
Hence it follows that}
\|dw_u(\nabla_x^mu_{x})(0)\|_{L^2(\TT;\RR^d)}^2
&\leqslant
\liminf_{t\to 0}
\|dw_u(\nabla_x^mu_x)(t)\|_{L^2(\TT;\RR^d)}^2.
\label{equation:es2}
\intertext{From \eqref{equation:es1} and \eqref{equation:es2}, 
we obtain} 
\lim_{t\to 0}
\|dw_u(\nabla_x^mu_x)(t)\|_{L^2(\TT;\RR^d)}^2
&=
\|dw_u(\nabla_x^mu_x)(0)\|_{L^2(\TT;\RR^d)}^2.
\label{equation:es3}
\end{align}
%%%%
%%%%
Consequently, it follows from \eqref{equation:es3} 
and the weak continuity of $dw_u(\nabla_x^mu_x)(t)$ 
in $L^2(\TT;\RR^d)$, 
$dw_u(\nabla_x^mu_x)(t)$ 
is strongly continuous in $L^2(\TT;\RR^d)$ 
at $t=0$. 
By the uniqueness of $u$,  
we see 
$dw_u(\nabla_x^mu_x)(t)$ 
is strongly continuous at each $t\in [0, T]$ in the same way. 
\end{proof}
%%%
%%%
%%%
\par 
Finally assume that $N$ is noncompact. 
In this case, 
retake  
$N'$ as a compact subset of $N$ in which  the image of initial 
data is contained. 
Then we can proceed the same argument on $N'$
as in the case $N$ is compact. 
Thus we complete the proof of Theorem~\ref{theorem:eo}. 
\end{proof}
%%
%%
%%%
%%%Section6. Global Existence
%%%
\section{Global Existence}
\label{section:global}
The goal of this section is to prove Theorem \ref{theorem:meo}. 
Let $(N, J, g)$ be a compact Riemann surface  
with constant Gaussian curvature $K$, 
and assume that $a\neq 0$ and $b=aK/2$. 
Theorem~\ref{theorem:eo} tells us that, 
given a initial data 
$u_0\in H^{m+1}(\TT;N)$, 
there exists  
$T=T(a, b, N,\|u_{0x}\|_{H^2(\TT;TN)})>0$ 
such that the IVP 
\eqref{equation:pde}-\eqref{equation:data} 
admits a unique time-local solution  
$u\in C([0,T);H^{m+1}(\TT;N))$.
\par 
In what follows  
we will extend the existence time of $u$ 
over $[0,\infty)$. 
For this,  
we have the following energy conversation laws.
\begin{lemma}
\label{lemma:conserved}
For $u\in C([0,T);H^{m+1}(\TT;N))$ 
solving \eqref{equation:pde}-\eqref{equation:data}, 
the following quantities
\begin{align}
&\|u_x(t)\|_{L^2(\TT;TN)}^2,
\nonumber \\
&E(u(t))
=
\|\nabla_x^2u_x(t)\|_{L^2(\TT;TN)}^2
+
\frac{K^2}{8}
\int_{\TT}
\left(
g(u_x(t),u_x(t))
\right)^3dx
\nonumber \\
&\phantom{E(u(t))=}-K
\int_{\TT}
\left(
g(u_x(t),\nabla_xu_x(t))
\right)^2dx
\nonumber \\
&\phantom{E(u(t))=}-\frac{3K}{2}
\int_{\TT}
g(u_x(t),u_x(t))
g(\nabla_xu_x(t),\nabla_xu_x(t))dx
\nonumber
\end{align}
are preserved with respect to $t\in [0,T)$.
\end{lemma}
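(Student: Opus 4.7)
The plan is to compute $\frac{d}{dt}$ of each conserved quantity directly from the PDE \eqref{equation:pde}, using the commutator $[\nabla_t,\nabla_x]V = R(u_t,u_x)V$, the symmetry $\nabla_t u_x = \nabla_x u_t$, the K\"ahler identity $\nabla J \equiv 0$, the antisymmetry of $J$ with respect to $g$, the constant-curvature formula
\[
R(X,Y)Z
=
K\bigl(g(Y,Z)X - g(X,Z)Y\bigr),
\]
which in particular gives $\nabla R \equiv 0$ and $R(u_x,u_x) = 0$, and periodic integration by parts on $\TT$. The conservation of $\|u_x(t)\|_{L^2(\TT;TN)}^2$ is immediate: from
\[
\frac{1}{2}\frac{d}{dt}\|u_x\|_{L^2(\TT;TN)}^2
=
\int_{\TT}g(\nabla_x u_t,u_x)\,dx
\]
and \eqref{equation:pde}, the $a$-contribution $a\int g(\nabla_x^3 u_x,u_x)dx$ becomes a perfect derivative after two integrations by parts, the $J$-contribution equals $-\int g(J\nabla_x u_x,\nabla_x u_x)dx = 0$ by the K\"ahler condition and antisymmetry of $J$, and the $b$-contribution is a multiple of $\int\partial_x\bigl[g(u_x,u_x)^2\bigr]dx = 0$.

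For $E(u(t))$, I would differentiate each of the four summands. The leading computation is
\[
\frac{d}{dt}\|\nabla_x^2 u_x\|_{L^2(\TT;TN)}^2
=
2\int_{\TT} g(\nabla_t\nabla_x^2 u_x,\nabla_x^2 u_x)\,dx,
\]
and iterating the commutator identity yields
\[
\nabla_t\nabla_x^2 u_x
=
\nabla_x^3 u_t
+
\nabla_x\bigl(R(u_t,u_x)u_x\bigr)
+
R(u_t,u_x)\nabla_x u_x.
\]
Substituting \eqref{equation:pde} into $u_t$, the leading $a$- and $J$-contributions coming from $\nabla_x^3 u_t$ vanish exactly as in the $L^2$ calculation. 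The remaining $b$-contribution, together with the two commutator correction terms (which reduce to scalar-coefficient expressions through the constant-curvature formula and $\nabla R \equiv 0$), become explicit polynomial expressions in $u_x,\nabla_x u_x,\nabla_x^2 u_x$ and $Ju_x$. Differentiating the three lower-order summands of $E$ produces expressions of the same multilinear type. Collecting everything and integrating by parts, the specific coefficients $1,\,K^2/8,\,-K,\,-3K/2$ in $E$ together with the relation $b = aK/2$ force complete cancellation; this is the combinatorial structure used by Zakharov--Shabat for the Hirota equation \cite{ZS} and by Nishiyama--Tani for $N = \mathbb{S}^2$ in \cite{NT,TN}, and the constant-curvature identity is what allows it to go through on a general $(N,J,g)$.

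The main obstacle is this algebraic bookkeeping: many terms appear and one must exploit every symmetry of $g$, $J$, and $R$ (as well as the orthogonality $g(u_x,Ju_x) = 0$ valid on a Riemann surface) to reduce the sum to zero, with the precise value $b = aK/2$ entering only at the final cancellation. A secondary issue is regularity, since the formal manipulations involve covariant derivatives of $u_x$ of order higher than $H^m$ allows when $m = 2$. I would handle this by first proving $\frac{d}{dt}E(u) = 0$ for more regular initial data $u_0 \in H^{M+1}(\TT;N)$ with $M$ large, applying Theorem~\ref{theorem:eo}, and then approximating a general $u_0 \in H^{m+1}$ by such smoother data and passing to the limit via continuity of $E$ on $H^3(\TT;N)$ together with the continuous dependence of the solution on its initial data implicit in Sections~\ref{section:parabolic}--\ref{section:proof1}.
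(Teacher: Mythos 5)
Your proposal follows essentially the same route as the paper: differentiate each quantity in time, commute $\nabla_t$ past $\nabla_x$ via the curvature identity, substitute the equation, and let the constant-curvature formula $R(X,Y)Z=K\{g(Y,Z)X-g(X,Z)Y\}$ (hence $\nabla R\equiv 0$) together with repeated integration by parts, the K\"ahler and symmetry properties of $g$, $J$, $R$, and the choice $b=aK/2$ force complete cancellation of the resulting multilinear terms. The only substantive difference is that you explicitly flag and patch the regularity issue for small $m$ by first working with smoother data and passing to the limit, a point the paper's formal computation passes over silently.
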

%%%
%%%
\begin{remark}
\label{remark:remark}
In \cite{NT} and \cite{TN},  
Nishiyama and Tani 
treated \eqref{equation:pde}-\eqref{equation:data} 
in case $N=\mathbb{S}^2$ with $K=1$, 
and proved a time-global existence theorem by using 
the following conserved quantity:
$$
\|u_{xxx}(t)\|^2
-\frac{7}{2}\||u_x(t)||u_{xx}(t)|\|^2
-14\|u_x(t)\cdot u_{xx}(t)\|^2
+\frac{21}{8}\||u_x(t)|^3\|^2,
$$ 
where $\|\cdot\|=\|\cdot\|_{L^2(\TT;\RR^3)}$. 
$E(u(t))$ generalizes the above quantity.
In fact, 
we can check that this quantity is reformulated as
\begin{align}
&\|\nabla_x^2u_x(t)\|_{L^2(\TT;TN)}^2
+
\frac{1}{8}
\int_{\TT}
\left(
g(u_x(t),u_x(t))
\right)^3dx
\nonumber \\
&-
\int_{\TT}
\left(
g(u_x(t),\nabla_xu_x(t))
\right)^2dx
-\frac{3}{2}
\int_{\TT}
g(u_x(t),u_x(t))
g(\nabla_xu_x(t),\nabla_xu_x(t))dx,
\nonumber 
\end{align}
which is just $E(u(t))$ with $K=1$.
\end{remark}
%%%%
%%%%Proof of Lemma~\ref{lemma:conserved}
%%%%
\begin{proof}[Proof of Lemma~\ref{lemma:conserved}]
It is obvious that $\|u_x(t)\|_{L^2(\TT;TN)}^2$ is preserved 
by the same computation as in Section~\ref{section:energy}. 
Hence we omit the proof.
\par 
We consider
\begin{align}
E(u(t))
=&
\|\nabla_x^2u_x(t)\|_{L^2(\TT;TN)}^2
+
A\int_{\TT}
\left(
g(u_x(t),u_x(t))
\right)^3dx
\nonumber \\
&
-B
\int_{\TT}
\left(
g(u_x(t),\nabla_xu_x(t))
\right)^2dx
\nonumber \\
&
-C
\int_{\TT}
g(u_x(t),u_x(t))
g(\nabla_xu_x(t),\nabla_xu_x(t))dx, 
\nonumber
\end{align}
where 
$A=K^2/8$, 
$B=K$, 
$C=3K/2$. 
Since $(N,J,g)$ has a constant sectional curvature $K$ 
as a $C^{\infty}$-manifold, it follows 
for $X,Y,$ and $Z\in \Gamma(u^{-1}TN)$ that
\begin{equation}
R(X,Y)Z
=
K\left\{
g(Y,Z)X-g(X,Z)Y
\right\}.
\label{equation:constant}
\end{equation}
Especially since $\nabla R=0$ holds, 
the term containing $\nabla^pR$, $p\in \mathbb{N}$ does not appear. 
We have only to compute by using \eqref{equation:constant}. 
We make use of the integration by parts 
repeatingly. 
Hence we only show the results of computations. 
A simple computation gives 
%%%
%%%
%%
\begin{align}
\frac{d}{dt}E(u)
=&
2\int_{\TT}g(\nabla_x^3u_t,\nabla_x^2u_x)dx
\nonumber \\
&
-(2B+2C)\int_{\TT}g(\nabla_xu_x,\nabla_xu_x)g(\nabla_xu_x,u_t)dx
\nonumber \\
&
-(6A+2CK)
\int_{\TT}(g(u_x,u_x))^2g(\nabla_xu_x,u_t)dx
\nonumber \\
&
-(2K+2B+4C)\int_{\TT}g(u_x,\nabla_x^2u_x)g(\nabla_xu_x,u_t)dx
\nonumber \\
&
+(2K-8C)\int_{\TT}g(\nabla_xu_x,u_x)g(\nabla_x^2u_x,u_t)dx
\nonumber \\
&
-(2K+2C)\int_{\TT}g(u_x,u_x)g(\nabla_x^3u_x,u_t)dx
\nonumber \\
&
+(2K-2B)\int_{\TT}g(u_x,\nabla_x^3u_x)g(u_x,u_t)dx
\nonumber \\
&
-(24A-2CK)\int_{\TT}g(u_x,u_x)g(u_x,\nabla_xu_x)g(u_x,u_t)dx
\nonumber \\
&
-(6B-4C)\int_{\TT}g(\nabla_x^2u_x,\nabla_xu_x)g(u_x,u_t)dx.
\nonumber 
\end{align}
%%%%
%%%%
We next substitute 
$u_t=a\, \nabla_x^2u_x+J\nabla_xu_x+b\, g(u_x,u_x)u_x$ 
into above and compute by repeating 
integration by parts. 
Then we deduce
%%%
%%%
\begin{align}
\frac{d}{dt}E(t)
=&
(6K-4C)\int_{\TT}g(\nabla_xu_x, u_x)g(\nabla_x^2u_x, J\nabla_xu_x)dx
\nonumber \\
&-
(2K+4B-4C)\int_{\TT}g(\nabla_xu_x, \nabla_x^2u_x)g(u_x, J\nabla_xu_x)dx
\nonumber \\
&-
(2K-2B)\int_{\TT}g(u_x, \nabla_x^2u_x)g(u_x, J\nabla_x^2u_x)dx
\nonumber \\
&-
(24A-2CK)\int_{\TT}g(u_x, u_x)g(u_x, \nabla_xu_x)g(u_x, J\nabla_xu_x)dx
\nonumber \\
&+
\{-(4K+6B)a+20b\}\int_{\TT}g(u_x, \nabla_x^2u_x)
                           g(\nabla_xu_x, \nabla_x^2u_x)dx
\nonumber \\
&+
\{(4K-6C)a+10b\}\int_{\TT}g(u_x, \nabla_xu_x)
                          g(\nabla_x^2u_x, \nabla_x^2u_x)dx
\nonumber \\
&+
\{(36A+2CK)a-10Cb\}\int_{\TT}g(u_x,u_x)g(u_x, \nabla_xu_x)
                             g(\nabla_xu_x,\nabla_xu_x)dx
\nonumber \\
&+
\{(24A-2CK)a+(-6B+4C)b\}\int_{\TT}\left(g(\nabla_xu_x,u_x)\right)^3dx.
\nonumber 
\end{align}
Since   
$A=K^2/8$, 
$B=K$, 
$C=3K/2$ 
and 
$b=aK/2$, 
a simple computation shows 
\begin{align}
\frac{d}{dt}E(t)
=&
-10(Ka-2b)\int_{\TT}g(u_x, \nabla_x^2u_x)g(\nabla_x^2u_x, \nabla_xu_x)dx
\nonumber \\
&-5(Ka-2b)\int_{\TT}g(u_x, \nabla_xu_x)g(\nabla_x^2u_x, \nabla_x^2u_x)dx
\nonumber \\
&+\frac{15}{2}K(Ka-2b)\int_{\TT}g(u_x,u_x)g(u_x, \nabla_xu_x)
                                g(\nabla_xu_x,\nabla_xu_x)dx
\nonumber \\
=&0,
\nonumber
\end{align}
which  completes the proof. 
\end{proof}
%%%%%
%%%%
%%%%
%%%
\begin{proof}[Proof of Theorem~\ref{theorem:meo}]
Let 
$u\in C([0,T);H^{m+1}(\TT;N))$ 
be the time-local solution of  
\eqref{equation:pde}-\eqref{equation:data} 
which exists on the maximal time interval $[0,T)$. 
If $T=\infty$, 
Theorem~\ref{theorem:meo} holds true. 
Thus we only need to consider the case $T<\infty$. 
From Lemma~\ref{lemma:conserved}, 
we know that
\begin{equation}
\|u_x(t)\|_{L^2(\TT;TN)}^2
=
\|u_{0x}\|_{L^2(\TT;TN)}^2,
\quad
E(u(t))=E(u_0).
\label{equation:q0}
\end{equation}
Hence it follows that
\begin{align}
\|\nabla_x^2u_x(t)\|_{L^2(\TT;TN)}^2
=&
E(u_0)-\frac{K^2}{8}
\int_{\TT}
\left(
g(u_x(t),u_x(t))
\right)^3dx
\nonumber \\
&+
K
\int_{\TT}
\left(
g(u_x(t),\nabla_xu_x(t))
\right)^2dx
\nonumber \\
&+
\frac{3K}{2}
\int_{\TT}
g(u_x(t),u_x(t))
g(\nabla_xu_x(t),\nabla_xu_x(t))dx
\nonumber \\
\leqslant&
E(u_0)
+
C|K|
\|u_x(t)\|_{L^{\infty}(\TT;TN)}^2
\|\nabla_xu_x(t)\|_{L^2(\TT;TN)}^2.
\nonumber 
\end{align}
%%%
Note here the Sobolev inequality 
and the Gagliardo-Nirenberg inequality 
of the form
\begin{align}
\|u_x(t)\|_{L^{\infty}(\TT;TN)}^2
&\leqslant
C
\|u_x(t)\|_{L^2(\TT;TN)}
(\|u_x(t)\|_{L^2(\TT;TN)}
+
\|\nabla_xu_x(t)\|_{L^2(\TT;TN)}),
\label{equation:so}
\\
\|\nabla_xu_x(t)\|_{L^2(\TT;TN)}
&\leqslant
C
\|\nabla_x^2u_x(t)\|_{L^2(\TT;TN)}^{1/2}
\|u_x(t)\|_{L^2(\TT;TN)}^{1/2}
\label{equation:gn}
\end{align}
hold. 
See e.g., \cite[Lemma~1.~3. and 1.~4.]{Koiso2}. 
From \eqref{equation:q0}, \eqref{equation:so} and \eqref{equation:gn},  
we deduce
%%%
\begin{align}
&\|\nabla_x^2u_x(t)\|_{L^2(\TT;TN)}^2
\nonumber \\\
&\quad
\leqslant
E(u_0)
+
C|K|\|u_{0x}\|_{L^2(\TT;TN)}
\nonumber \\
&\qquad \qquad \qquad
\times
\left(
\|u_{0x}\|_{L^2(\TT;TN)}
+
\|u_{0x}\|_{L^2(\TT;TN)}^{1/2}
\|\nabla_x^2u_x(t)\|_{L^2(\TT;TN)}^{1/2}
\right)
\nonumber \\
&\qquad \qquad \qquad
\times
\|u_{0x}\|_{L^2(\TT;TN)}
\|\nabla_x^2u_x(t)\|_{L^2(\TT;TN)}
\nonumber \\
&\quad
\leqslant
C(N,\|u_{0x}\|_{H^2(\TT;TN)})
(1+\|\nabla_x^2u_x(t)\|_{L^2(\TT;TN)}^{3/2}).
\nonumber
\end{align}
Thus $X=X(t)=\|\nabla_x^2u_x(t)\|_{L^2(\TT;TN)}$ 
satisfies $X^2\leqslant C(1+X^{3/2})$,  
which implies
\begin{align}
\sup_{t\in [0, T)}\|\nabla_x^2u_x(t)\|_{L^2(\TT;TN)}
&\leqslant
C(N,\|u_{0x}\|_{H^2(\TT;TN)})
\label{equation:q1}
\intertext{
for some $C=C(N,\|u_{0x}\|_{H^2(\TT;TN)})>0$. 
Interpolating \eqref{equation:q0} and \eqref{equation:q1} 
we have}
\sup_{t\in [0,T)}
\|u_x(t)\|_{H^2(\TT;TN)}
&\leqslant
C(N,\|u_{0x}\|_{H^2(\TT;TN)}).
\nonumber \\
\intertext{Since we obtain the $H^2(\TT;TN)$-boundness 
of $u_x$,}
\sup_{t\in [0,T)}
\|u_x(t)\|_{H^m(\TT;TN)}
&\leqslant
C(N,\|u_{0x}\|_{H^m(\TT;TN)})
\nonumber
\end{align}
%%%
%%%
follows as in the proof of Theorem~\ref{theorem:eo}. 
Hence, for small $0<\sigma <T$,
there exists  $T_0>0$  
and a  time-local solution $u_1$ of 
\eqref{equation:pde}-\eqref{equation:data} 
on the time interval $[0,T_0)$ 
with initial data 
$u_1(0, x)=u(T-\sigma, x)$.
From the  uniform estimate of 
$\|u_x(t)\|_{H^2(\TT;TN)}$ 
on $[0,T)$, we see
$T_0$ does not depend on $\sigma$. 
Thus, by choosing  $\sigma$ small enough, 
we have 
$T-\sigma+T_0 >T$.  
By the uniqueness theorem, 
we know 
$u_1(t,x)=u(T-\sigma+t,x)$ 
for any $t\in[0,T_0)$. 
Thus $u$ can be extended  to the time interval 
$[0,T-\sigma+T_0)$, 
which contradicts the maximality of $T$. 
\end{proof}
%\\
{\bf Acknowledgement.} \\
The author expresses gratitude to Hiroyuki Chihara  
for several discussions and valuable advice.
%%%%%
%%%%%
%%%%%
%%%%%%
%%%%%% references
%%%%%%

%
%
%%%%
\end{document}